\newcommand{\R}{{\mathbb R}}
\newcommand{\bx}{\mbox{\boldmath{$x$}}}
\newcommand{\bb}{\mbox{\boldmath{$b$}}}
\newcommand{\be}{\mbox{\boldmath{$e$}}}
\newcommand{\bn}{\mbox{\boldmath{$n$}}}
\newcommand{\by}{\mbox{\boldmath{$y$}}}
\newcommand{\sby}{\mbox{\boldmath{${\scriptstyle y}$}}}
\newcommand{\sbx}{\mbox{\boldmath{${\scriptstyle x}$}}}
\newcommand{\bzero}{\mbox{\boldmath{$0$}}}
\newcommand{\Null}[1]{\ensuremath{\mathcal{N}({#1})}}
\newcommand{\bl}{\begin{list}{ \ }{
\leftmargin=.325in}}
\newcommand{\el}{\end{list}}
\begin{document}
\date{}
\title{Regularization matrices for discrete ill-posed problems in several
space-dimensions}
\author{
Laura Dykes\thanks{University School, Hunting Valley, OH 44022, USA. E-mail:
{\tt ldykes@math.kent.edu.}}
\and
Guangxin Huang\thanks{Geomathematics Key Laboratory of Sichuan, College of Management
Science, Chengdu University of Technology, Chengdu, 610059, P. R. China. E-mail:
{\tt huangx@cdut.edu.cn.}}
\and
Silvia Noschese\thanks{Department of Mathematics, SAPIENZA Universit\`a di Roma, P.le
A. Moro, 2, I-00185 Roma, Italy. E-mail: {\tt noschese@mat.uniroma1.it}.}
\and
Lothar Reichel\thanks{Department of Mathematical Sciences, Kent State University, Kent,
OH 44242, USA. E-mail: {\tt reichel@math.kent.edu}.}
}

\maketitle

\begin{abstract}
Many applications in science and engineering require the solution of large linear discrete
ill-posed problems that are obtained by the discretization of a Fredholm integral equation
of the first kind in several space-dimensions. The matrix that defines these problems 
is very ill-conditioned and generally numerically singular, and the right-hand side, which 
represents measured data, typically is contaminated by measurement error. Straightforward 
solution of these problems generally is not meaningful due to severe error propagation. 
Tikhonov regularization seeks to alleviate this difficulty by replacing the given linear 
discrete ill-posed problem by a penalized least-squares problem, whose solution is less 
sensitive to the error in the right-hand side and to round-off errors introduced during 
the computations. This paper discusses the construction of penalty terms that are 
determined by solving a matrix-nearness problem.  These penalty terms allow partial 
transformation to standard form of Tikhonov regularization problems that stem from the 
discretization of integral equations on a cube in several space-dimensions.
\end{abstract}

\begin{keywords}
Discrete ill-posed problems; Tikhonov regularization; standard form problems; matrix
nearness problems; Krylov subspace iterative methods
\end{keywords}

\begin{AMS}
65F10; 65F22; 65R30
\end{AMS}

\section{Introduction}\label{sec1}
We consider the solution of linear discrete ill-posed problems that arise from the
discretization of a Fredholm integral equation of the first kind on a cube in two or more
space-dimensions. Discretization of the integral operator yields a matrix
$K\in\R^{M\times N}$, which we assume to be large. A vector $\bb\in\R^M$ that represents
measured data, and therefore is error-contaminated, is available and we would like to
compute an approximate solution of the least-square problem
\begin{equation}\label{eq:sys}
   \min_{\sbx\in\R^N} \|K\bx-\bb\|.
\end{equation}
The matrix $K$ has many ``tiny'' singular values of different orders of magnitude. This
makes $K$ severely ill-conditioned; in fact, $K$ may be numerically singular.
Least-squares problems \eqref{eq:sys} with a matrix of this kind are commonly referred to
as linear discrete ill-posed problems.

Let $\be\in\R^M$ denote the (unknown) error in $\bb$. Thus,
\begin{equation}\label{noisefree}
\bb=\widehat{\bb}+\be,
\end{equation}
where $\widehat{\bb}\in\R^M$ stands for the unknown error-free vector associated with
$\bb$. We will assume the unavailable linear system of equations
\begin{equation}\label{consistent}
K\bx=\widehat{\bb}
\end{equation}
to be consistent.

Let $K^\dag$ denote the Moore--Penrose pseudoinverse of the matrix $K$. We are interested
in determining the solution $\widehat{\bx}$ of \eqref{consistent} of minimal Euclidean
norm; it is given by $K^\dag\widehat{\bb}$. We note that the solution of minimal Euclidean
norm of \eqref{eq:sys},
\[
K^\dag\bb=K^\dag\widehat{\bb}+K^\dag\be=\widehat{\bx}+K^\dag\be,
\]
generally is not a meaningful approximation of $\widehat{\bx}$ due to a large propagated
error $K^\dag\be$. This difficulty stems from the fact that $\|K^\dag\|$ is large.
Throughout this paper $\|\cdot\|$ denotes the Euclidean vector norm or spectral matrix
norm. We also will use the Frobenius norm of a matrix, defined by
$\|K\|_F=\sqrt{{\rm trace}(K^TK)}$, where the superscript $^T$ stands for transposition.

To avoid severe error propagation, one replaces the least-squares problem \eqref{eq:sys}
by a nearby problem, whose solution is less sensitive to the error $\be$ in $\bb$. This
replacement is known as regularization. Tikhonov regularization, which is one of the most
popular regularization methods, replaces \eqref{eq:sys} by a penalized least-squares
problem of the form
\begin{equation}\label{eq:tik}
    \min_{{\sbx}\in\R^N}\left\{\|K\bx-\bb\|^2+\mu\|L\bx\|^2\right\},
\end{equation}
where $L\in\R^{J\times N}$ is referred to as the regularization matrix and the scalar
$\mu>0$ as the regularization parameter; see, e.g., \cite{BRRS,EHN,Ha2}. We assume the
matrix $L$ to be chosen so that
\begin{equation}\label{nullcond}
\Null{K}\cap \Null{L} = \{\bzero\},
\end{equation}
where $\Null{M}$ denotes the null space of the matrix $M$. Then the minimization problem
\eqref{eq:tik} has a unique solution
\[
\bx_\mu=(K^TK+\mu L^TL)^{-1}K^T\bb
\]
for any $\mu>0$.

Common choices of $L$ include the identity matrix and discretizations of differential
operators. The Tikhonov minimization problem \eqref{eq:tik} is said to be in
\emph{standard form} when $L=I$; otherwise it is in \emph{general form}. Numerous computed
examples in the literature, see, e.g., \cite{CRS,DNR1,GNR,RYe}, illustrate that the
choice of $L$ can be important for the quality of the computed approximation $\bx_\mu$ of
$\widehat{\bx}$. The regularization matrix $L$ should be chosen so that known important
features of the desired solution $\widehat{\bx}$ of \eqref{consistent} are not damped.
This can be achieved by choosing $L$ so that $\Null{L}$ contains vectors that represent
these features, because vectors in $\Null{L}$ are not damped by $L$.

Several approaches to construct regularization matrices with desirable properties are
described in the literature; see, e.g., \cite{BJ,CRS,DNR1,DR1,HJ,HNR,KHE,MRS,NR,RYe,RYu2}.
Huang et al. \cite{HNR} propose the construction of square regularization matrices with a
user-specified null space by solving a matrix nearness problem in the Frobenius norm. The
regularization matrices so obtained are designed for linear discrete ill-posed problems in
one space-dimension. This paper extends this approach to problems in higher
space-dimensions. The regularization matrices of this paper generalize those applied by
Bouhamidi and Jbilou \cite{BJ} by allowing a user-specified null space.

Consider the special case of $d=2$ space-dimensions and let the matrix $K$ be determined
by discretizing an integral equation on a square $n\times n$ grid (i.e., $N=n^2$). The
regularization matrix
\begin{equation}\label{Ltensor1}
L_{1,\otimes}=\left[\begin{array}{ccc} I & \otimes & L_1 \\ L_1 & \otimes & I
\end{array}\right],
\end{equation}
where $I\in\R^{n\times n}$ is the identity matrix,
\begin{equation}\label{L1}
L_1 = \frac{1}{2}\left[ \begin{array} {cccccc}
 1 &   -1    &     &    &   & \mbox{\Large 0} \\
   &  \phantom{-}1    &   -1    &     &    & \\
   &   &  \phantom{-}1    &  -1  &    &    \\
   &   &  & \ddots & \ddots  &   \\
 \mbox{\Large 0}   &        &        &    & \phantom{-}1  & -1
\end{array}
\right]\in{\R}^{(n-1)\times n},
\end{equation}
and $\otimes$ denotes the Kronecker product, has frequently been used for this kind of
problem; see e.g., \cite{CLR,HRY,KHE,LMRS,RYu1}. We note for future reference that
${\mathcal N}(L_1)={\rm span}\{[1,1,\ldots,1]^T\}$.

It also may be attractive to replace the matrix \eqref{L1} in \eqref{Ltensor1} by
\begin{equation}\label{L2}
L_2 = \frac{1}{4}\left[ \begin{array} {cccccc}
 -1   &  \phantom{-}2    &   -1    &     &    &\mbox{\Large 0} \\
      &   -1  &  \phantom{-}2    &  -1  &    &    \\
      &       & \ddots & \ddots & \ddots  &   \\
  \mbox{\Large 0}    &        &        &   -1  & \phantom{-}2      & -1
\end{array}
\right]\in{\R}^{(n-2)\times n}
\end{equation}
with null space ${\mathcal N}(L_2)={\rm span}\{[1,1,\ldots,1]^T,[1,2,\ldots,n]^T\}$. This
yields the regularization matrix
\begin{equation}\label{Ltensor2}
L_{2,\otimes}=\left[\begin{array}{ccc} I & \otimes & L_2 \\ L_2 & \otimes & I
\end{array}\right].
\end{equation}

Both the regularization matrices \eqref{Ltensor1} and \eqref{Ltensor2} are rectangular
with almost twice as many rows as columns when $n$ is large.

Bouhamidi and Jbilou \cite{BJ} proposed the use of the smaller invertible regularization
matrix
\begin{equation}\label{Ltensor3}
L_{2,\otimes}=\widetilde{L}_2\otimes\widetilde{L}_2,
\end{equation}
where
\begin{equation}\label{L2square}
\widetilde{L}_2 = \frac{1}{4}\left[ \begin{array} {cccccc}
\phantom{-}2    &   -1    &     &    &   &  \mbox{\Large 0} \\
 -1   &  \phantom{-}2    &   -1    &     &    \\
      &   -1  &  \phantom{-}2    &  -1  &    &    \\
      &       & \ddots & \ddots & \ddots  &   \\
      &        &        &   -1  & \phantom{-}2      & -1\\
\mbox{\Large 0}    &    &    &        &   -1  & \phantom{-}2
\end{array}
\right]\in{\R}^{n\times n}
\end{equation}
is a square nonsingular regularization matrix. Therefore the regularization matrix
\eqref{Ltensor3} also is square and nonsingular, which makes it easy to transform the
Tikhonov minimization problem \eqref{eq:tik} so obtained to standard form; see below.

Following Bouhamidi and Jbilou \cite{BJ}, we consider square matrices $K$ with a tensor
product structure, i.e.,
\begin{equation}\label{Ktensor}
K=K^{(2)}\otimes K^{(1)}.
\end{equation}
We assume for simplicity that $K^{(1)},K^{(2)}\in{\R}^{n\times n}$ with $N=n^2$. However,
we note that the regularization matrices described in this paper can be applied also when
the matrix $K$ in \eqref{eq:sys} does not have a tensor product structure.

Bouhamidi and Jbilou \cite{BJ} are concerned with applications to image restoration and
achieve restorations of high quality. However, for linear discrete ill-posed problems in
one space-dimension, analysis presented in \cite{CRS,HJ} indicates that the regularization
matrix \eqref{L2}, with a non-trivial null space, can give approximate solutions of higher
quality than the matrix \eqref{L2square}, which has a trivial null space. This depends on
that the latter matrix may introduce artifacts close to the boundary; see also
\cite{DNR1,DR1,RYe} for related discussions and illustrations. It is the aim of the
present paper to develop a generalization of the regularization matrix \eqref{Ltensor2}
that has a non-trivial null space. Our approach to define such a regularization matrix
generalizes the technique proposed in \cite{HNR} from one to several space-dimensions.
Specifically, the regularization matrix is defined by solving a matrix nearness problem in
the Frobenius norm. The regularization matrix so obtained allows a partial transformation
of the Tikhonov regularization problem \eqref{eq:tik} to standard form. When the matrix
$K$ is square, Arnoldi-type iterative solution methods can be used. Arnoldi-type iterative
solution methods often require fewer matrix-vector product evaluations than iterative
solution methods based on Golub--Kahan bidiagonalization, because they do not require
matrix-vector product evaluations with $K^T$; see, e.g., \cite{LR} for illustrations. A
nice recent survey of iterative solution methods for discrete ill-posed problems is
provided by Gazzola et al.  \cite{GNR}.

This paper is organized as follows. Section \ref{sec2} describes our construction of new
regularization matrices for problems in two space-dimensions. The section also discusses
iterative methods for the solution of the Tikhonov minimization problems obtained. We
consider both the situation when $K$ is a general matrix and when $K$ has a tensor product
structure. Section \ref{sec3} generalizes the results of Section \ref{sec2} to more than
two space-dimensions. Computed examples can be found in Section \ref{sec4}, and Section
\ref{sec5} contains concluding remarks.

We conclude this section by noting that while this paper focuses on iterative solution
methods for large-scale Tikhonov minimization problems \eqref{eq:tik}, the regularization
matrices described also can be applied in direct solution methods for small to
medium-sized problems that are based on the generalized singular value decomposition
(GSVD); see, e.g., \cite{DNR2,Ha2} for discussions and references.

\section{Regularization matrices for problems in two space-dimensions}\label{sec2}
Many image restoration problems as well as problems from certain other applications
(\ref{eq:sys}) have a matrix $K\in\R^{N\times N}$ that is the Kronecker product of two
matrices $K^{(1)},K^{(2)}\in\R^{n\times n}$ with $N=n^2$, cf. \eqref{Ktensor}. We will
consider this situation in most of this section; the case when $K$ is a general square 
matrix without Kronecker product structure is commented on at the end of the section. 
Extension to rectangular matrices $K$, $K^{(1)}$, and $K^{(2)}$ is straightforward.

We will use regularization matrices with a Kronecker product structure,
\begin{equation}\label{Ltensor}
L = L^{(2)} \otimes L^{(1)}
\end{equation}
and will discuss the choice of square regularization matrices
$L^{(1)},L^{(2)}\in\R^{n\times n}$. The following result is an extension of 
\cite[Proposition 2.1]{HNR} to problems with a Kronecker product structure. Let 
${\mathcal R}(A)$ denote the range of the matrix $A$ and define the Frobenius inner 
product
\begin{equation}\label{globip}
\langle A_1,A_2\rangle={\rm trace}(A_1^TA_2)
\end{equation}
between matrices $A_1,A_2\in{\R}^{m_1\times m_2}$. Throughout this section $N=n^2$.

\begin{proposition}\label{prop1}
Let the matrices $V^{(1)}\in\R^{n\times\ell_1}$ and $V^{(2)}\in\R^{n\times\ell_2}$ have
orthonormal columns, and let ${\mathcal B}$ denote the subspace of matrices of the form
$B=B^{(2)}\otimes B^{(1)}$, where the null space of $B^{(i)}\in\R^{n\times n}$ contains
${\mathcal R}(V^{(i)})$ for $i=1,2$. Introduce for $i=1,2$ the orthogonal projectors
$P^{(i)}=I-V^{(i)}V^{(i)T}$ with null space ${\mathcal R}(V^{(i)})$. Let
$P=P^{(2)}\otimes P^{(1)}$. Then the matrix $\widehat{A}=AP$ is a closest matrix to
$A=A^{(2)}\otimes A^{(1)}$ with $A^{(i)}\in\R^{n\times n}$, $i=1,2$, in ${\mathcal B}$ in
the Frobenius norm.
\end{proposition}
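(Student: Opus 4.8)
The plan is to verify first that $\widehat{A}=AP$ lies in $\mathcal{B}$, then to show that every member of $\mathcal{B}$ is left fixed by right-multiplication with $P$, and finally to deduce optimality from a Pythagorean-type splitting of $\|A-B\|_F^2$.

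First I would record two algebraic facts about $P=P^{(2)}\otimes P^{(1)}$. Since each $P^{(i)}=I-V^{(i)}V^{(i)T}$ is symmetric and idempotent (using $V^{(i)T}V^{(i)}=I$), the mixed-product property of the Kronecker product gives $P^T=P$ and $P^2=P$, so $P$ is an orthogonal projector; consequently $P(I-P)=0$. Next, the mixed-product property yields $AP=(A^{(2)}P^{(2)})\otimes(A^{(1)}P^{(1)})$, and because $P^{(i)}$ annihilates $\mathcal{R}(V^{(i)})$ the factor $A^{(i)}P^{(i)}$ has null space containing $\mathcal{R}(V^{(i)})$. Hence $\widehat{A}=AP\in\mathcal{B}$.

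The key structural observation is that $BP=B$ for every $B=B^{(2)}\otimes B^{(1)}\in\mathcal{B}$. Indeed, the hypothesis $\mathcal{R}(V^{(i)})\subseteq{\mathcal N}(B^{(i)})$ means $B^{(i)}V^{(i)}=0$, so $B^{(i)}P^{(i)}=B^{(i)}-B^{(i)}V^{(i)}V^{(i)T}=B^{(i)}$, and the mixed-product property then gives $BP=(B^{(2)}P^{(2)})\otimes(B^{(1)}P^{(1)})=B$; equivalently, $B(I-P)=0$ for every such $B$.

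Finally I would split $A-B=A(I-P)+(AP-B)$ for an arbitrary $B\in\mathcal{B}$ and show the two terms are orthogonal in the Frobenius inner product. Using $BP=B$ to write $AP-B=(A-B)P$, the cross term equals $\langle A(I-P),(A-B)P\rangle={\rm trace}((I-P)A^T(A-B)P)$, which vanishes by the cyclic property of the trace together with $P(I-P)=0$. Hence $\|A-B\|_F^2=\|A(I-P)\|_F^2+\|AP-B\|_F^2$, where the first term is independent of $B$ and the second is nonnegative, attaining the value zero precisely when $B=AP=\widehat{A}\in\mathcal{B}$. This establishes the claim. The only genuine content is recognizing that $\mathcal{B}$ consists exactly of matrices fixed under right-multiplication by $P$ and spotting that the cross term vanishes; the Kronecker bookkeeping is routine, so I do not anticipate a real obstacle.
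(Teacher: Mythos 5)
Your proof is correct, and it reaches the orthogonality at the heart of the matter by a different mechanism than the paper. The paper verifies three properties of $\widehat{A}=AP$ (membership in $\mathcal{B}$, consistency when $A\in\mathcal{B}$, and $\langle B,A-\widehat{A}\rangle=0$ for all $B\in\mathcal{B}$), and it establishes the orthogonality by expanding $A-\widehat{A}$ through the Kronecker structure and writing the Frobenius inner product as sums of products of traces of the factors, each of which vanishes because $B^{(i)}V^{(i)}=0$. You instead isolate the two projector identities that make everything work --- $AP\in\mathcal{B}$ and $BP=B$ for every $B\in\mathcal{B}$ --- and then run a clean Pythagorean splitting $\|A-B\|_F^2=\|A(I-P)\|_F^2+\|AP-B\|_F^2$ using $P(I-P)=0$. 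This buys you two things: the Kronecker bookkeeping is confined to verifying the two identities rather than threaded through the trace computation, and the final optimality step (plus the characterization of when equality holds) is made explicit, whereas the paper leaves the passage from its three listed conditions to the nearness claim implicit. The argument as you phrase it also generalizes immediately to any set of matrices fixed under right-multiplication by an orthogonal projector, which is essentially how the paper's Propositions 3.1 and 3.2 extend the result to $d>2$ factors. One small caution: your closing remark that $\mathcal{B}$ consists \emph{exactly} of the matrices fixed by $P$ is more than you prove or need; only the inclusion $\mathcal{B}\subseteq\{B:BP=B\}$ enters the argument.
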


\begin{proof}
The matrix $\widehat{A}$ satisfies the following conditions:
\begin{enumerate}
\item
$\widehat{A}\in {\cal B}$;
\item
if $A\in {\cal B}$, then $\widehat{A} \equiv A$;
\item
if $B\in {\cal B}$, then $\langle B,A-\widehat{A}\rangle=0$.
\end{enumerate}
In fact,
\[
\widehat{A}(V^{(2)}\otimes V^{(1)})=A(P^{(2)}V^{(2)}\otimes P^{(1)}V^{(1)})=0,
\]
which shows the first property. The second property implies that
\[
A^{(2)}V^{(2)}=0,\qquad A^{(1)}V^{(1)}=0,
\]
from which it follows that
\[
\widehat{A}=(A^{(2)}-A^{(2)}V^{(2)}V^{(2)T})\otimes(A^{(1)}-A^{(1)}V^{(1)}V^{(1)T})=
A^{(2)}\otimes A^{(1)}=A.
\]
Finally, for any $B\in{\mathcal B}$ of the form $B=B^{(2)}\otimes B^{(1)}$, one has that
\[
B^{(2)}V^{(2)}=V^{(2)T}B^{(2)T}=0,\qquad B^{(1)}V^{(1)}=V^{(1)T}B^{(1)T}=0,
\]
so that
\begin{eqnarray*}
\langle B,A-\widehat{A}\rangle &=&\textrm{trace}(B^TA-B^T\widehat{A}) \\
&=&\textrm{trace}(B^{(2)T}A^{(2)})\,\textrm{trace}(B^{(1)T}A^{(1)}V^{(1)}V^{(1)T})\\
&&+\textrm{trace}(B^{(2)T}A^{(2)}V^{(2)}V^{(2)T})\,\textrm{trace}(B^{(1)T}A^{(1)})\\
&&-\textrm{trace}(B^{(2)T}A^{(2)}V^{(2)}V^{(2)T})\, 
\textrm{trace}(B^{(1)T}A^{(1)}V^{(1)}V^{(1)T})=0,
\end{eqnarray*}
where the last equality follows from the cyclic property of the trace.
\end{proof}

Example 2.1. Let $L_2$ and $\widetilde{L}_2$ be defined by \eqref{L2} and
\eqref{L2square}, respectively. Proposition \ref{prop1} shows that a closest matrix to
$\widetilde{L}=\widetilde{L}_2\otimes \widetilde{L}_2$ in the Frobenius norm with null
space ${\mathcal N}(L_2\otimes L_2)$ is
\[
L=\widetilde{L}_2P_2\otimes \widetilde{L}_2P_2,
\]
where $P_2$ is the orthogonal projector onto ${\mathcal N}(L_2)^\perp$.
$~~~\Box$

Example 2.2. Define the square nonsingular regularization matrix
\begin{equation}\label{L1square}
\widetilde{L}_1 = \frac{1}{2}\left[ \begin{array} {cccccc}
 1 &   -1  &    &    &   & \mbox{\Large 0} \\
   &  \phantom{-}1    &   -1  &     &    & \\
   &   &  \phantom{-}1    &  -1  &   &    \\
   &   &  &  \ddots & \ddots  &   \\
   &   &  &  &  &  -1 \\
   \mbox{\Large 0}   &        &    &    &    & \phantom{-}1
 \end{array}
  \right]\in{\R}^{n\times n}.
\end{equation}
A closest matrix to $\widetilde{L}=\widetilde{L}_1\otimes \widetilde{L}_1$ in the
Frobenius norm with null space ${\mathcal N}(L_1\otimes L_1)$ is given by
\[
L=\widetilde{L}_1P_1\otimes \widetilde{L}_1P_1,
\]
where $P_1$ is the orthogonal projector onto ${\mathcal N}(L_1)^\perp$; see, e.g.,
\cite{HNR}. $~~~\Box$

The following result is concerned with the situation when the order of the nonsingular
matrices $\widetilde{L}_i$ and projectors $P_i$ in Examples 2.1 and 2.2 is reversed. We
first consider the situation when $\widetilde{L}$ is a square matrix without Kronecker
product structure, since this situation is of independent interest.

\begin{proposition}\label{prop2}
Let $\widetilde{L}\in\R^{n\times n}$ and let ${\mathcal V}$ be a subspace of $\R^n$. Define
the orthogonal projector $P_{{\mathcal V}^\perp}$ onto ${\mathcal V}^\perp$. Then the
closest matrix $\widehat{L}\in\R^{n\times n}$ to $\widetilde{L}$ in the Frobenius norm,
such that ${\mathcal R}(\widehat{L})\subset{\mathcal V}^\perp$, is given by
$\widehat{L}=P_{{\mathcal V}^\perp}\widetilde{L}$.
\end{proposition}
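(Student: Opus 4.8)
The plan is to recognize this as the orthogonal projection of $\widetilde{L}$ onto a linear subspace of $\R^{n\times n}$, where the space is equipped with the Frobenius inner product \eqref{globip}. First I would rewrite the range constraint in a form that is linear in the unknown matrix: since $\mathcal{R}(\widehat{L})\subset\mathcal{V}^\perp$ means that every column of $\widehat{L}$ lies in $\mathcal{V}^\perp$, it is equivalent to $P_{\mathcal{V}^\perp}\widehat{L}=\widehat{L}$. Hence the feasible set $\mathcal{S}=\{B\in\R^{n\times n}:P_{\mathcal{V}^\perp}B=B\}$ is a subspace, and minimizing $\|\widehat{L}-\widetilde{L}\|_F$ over $\mathcal{S}$ is a projection problem whose unique solution is characterized by two conditions: the candidate must lie in $\mathcal{S}$, and the residual must be Frobenius-orthogonal to all of $\mathcal{S}$. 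This mirrors the orthogonality condition used in the proof of Proposition \ref{prop1}, except that the constraint here is on the range rather than the null space, so the projector multiplies $\widetilde{L}$ on the left instead of on the right.

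Next I would verify the two conditions for the claimed minimizer $\widehat{L}=P_{\mathcal{V}^\perp}\widetilde{L}$. Membership in $\mathcal{S}$ is immediate from idempotence: $P_{\mathcal{V}^\perp}\widehat{L}=P_{\mathcal{V}^\perp}^2\widetilde{L}=P_{\mathcal{V}^\perp}\widetilde{L}=\widehat{L}$. For orthogonality, I would write the residual as $\widetilde{L}-\widehat{L}=(I-P_{\mathcal{V}^\perp})\widetilde{L}=P_{\mathcal{V}}\widetilde{L}$, where $P_{\mathcal{V}}=I-P_{\mathcal{V}^\perp}$ is the complementary orthogonal projector onto $\mathcal{V}$, and then compute, for an arbitrary $B\in\mathcal{S}$,
\[
\langle B,\widetilde{L}-\widehat{L}\rangle=\mathrm{trace}(B^TP_{\mathcal{V}}\widetilde{L})=\mathrm{trace}(B^TP_{\mathcal{V}^\perp}P_{\mathcal{V}}\widetilde{L})=0.
\]
Here the middle equality inserts a projector using $B=P_{\mathcal{V}^\perp}B$ together with the symmetry $P_{\mathcal{V}^\perp}^T=P_{\mathcal{V}^\perp}$, and the final equality uses $P_{\mathcal{V}^\perp}P_{\mathcal{V}}=0$.

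To conclude, I would invoke the Pythagorean identity. For any feasible $B\in\mathcal{S}$, the difference $B-\widehat{L}$ again lies in $\mathcal{S}$, so it is Frobenius-orthogonal to the residual $\widehat{L}-\widetilde{L}=-P_{\mathcal{V}}\widetilde{L}$ by the step above; therefore
\[
\|B-\widetilde{L}\|_F^2=\|B-\widehat{L}\|_F^2+\|\widehat{L}-\widetilde{L}\|_F^2\ge\|\widehat{L}-\widetilde{L}\|_F^2,
\]
with equality precisely when $B=\widehat{L}$. This shows that $\widehat{L}=P_{\mathcal{V}^\perp}\widetilde{L}$ is the unique closest matrix. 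I do not anticipate a genuine obstacle here, since the argument is a standard subspace-projection computation; the only point requiring care is keeping track of the side on which the projector acts, because the range constraint forces left-multiplication, which is exactly the feature that distinguishes this result from Proposition \ref{prop1} and from Examples 2.1 and 2.2.
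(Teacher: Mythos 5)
Your proof is correct, but it takes a genuinely different route from the paper. The paper proves Proposition \ref{prop2} by transposition: it invokes the known result \cite[Proposition 2.3]{HNR}, which states that the closest matrix to $\widetilde{L}^T$ whose \emph{null space} contains ${\mathcal V}$ is $\widetilde{L}^TP_{{\mathcal V}^\perp}$, and then uses the invariance of the Frobenius norm under transposition together with the symmetry of $P_{{\mathcal V}^\perp}$ to convert the null-space statement about $\widetilde{L}^T$ into the range statement about $\widetilde{L}$. You instead argue from scratch: you identify the feasible set as the linear subspace ${\mathcal S}=\{B:P_{{\mathcal V}^\perp}B=B\}$, verify that $P_{{\mathcal V}^\perp}\widetilde{L}$ lies in ${\mathcal S}$ and that the residual $P_{\mathcal V}\widetilde{L}$ is Frobenius-orthogonal to ${\mathcal S}$, and close with the Pythagorean identity. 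Both arguments are sound. The paper's reduction is shorter and reuses existing machinery, but it leans on an external citation and only asserts that the product is \emph{a} closest matrix; your direct projection argument is self-contained, makes the analogy with the proof of Proposition \ref{prop1} explicit, and additionally establishes uniqueness of the minimizer, which the statement's definite article implicitly claims but the paper's proof does not address.
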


\begin{proof}
Consider the problem of determining a closest matrix $\widehat{L}^T\in\R^{n\times n}$ to
$\widetilde{L}^T$ in the Frobenius norm whose null space contains ${\mathcal V}$. It is
shown in \cite[Proposition 2.3]{HNR} that
$\widehat{L}^T=\widetilde{L}^TP_{{\mathcal V}^\perp}$ is such a matrix. The Frobenius norm
is invariant under transposition and orthogonal projectors are symmetric. Therefore,
\[
\|\widetilde{L}^TP_{{\mathcal V}^\perp}-\widetilde{L}^T\|_F=
\|P_{{\mathcal V}^\perp}\widetilde{L}-\widetilde{L}\|_F.
\]
Moreover, ${\mathcal R}(P_{{\mathcal V}^\perp})={\mathcal V}^\perp$. It follows that a
closest matrix to $\widetilde{L}$ in the Frobenius norm whose range is a subset of
${\mathcal V}^\perp$ is given by $P_{{\mathcal V}^\perp}\widetilde{L}$.
\end{proof}

The following result extends Proposition \ref{prop2} to matrices with a tensor product
structure. We formulate the result similarly as Proposition \ref{prop1}.

\begin{corollary}\label{cor1}
Let the matrices $V^{(1)}\in\R^{n\times\ell_1}$ and $V^{(2)}\in\R^{n\times\ell_2}$ have
orthonormal columns, and let ${\mathcal B}$ denote the subspace of matrices of the form
$B=B^{(2)}\otimes B^{(1)}$, where the range of $B^{(i)}\in\R^{n\times n}$ is orthogonal to
${\mathcal R}(V^{(i)})$ for $i=1,2$. Introduce for $i=1,2$ the orthogonal projectors
$P^{(i)}=I-V^{(i)}V^{(i)T}$ and let $P=P^{(2)}\otimes P^{(1)}$. Then the matrix
$\widehat{A}=PA$ is a closest matrix to $A=A^{(2)}\otimes A^{(1)}$ with
$A^{(i)}\in\R^{n\times n}$, $i=1,2$, in ${\mathcal B}$ in the Frobenius norm.
\end{corollary}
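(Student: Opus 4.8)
The plan is to reduce Corollary \ref{cor1} to Proposition \ref{prop2} in exactly the way Proposition \ref{prop1} was reduced to its one-dimensional counterpart, exploiting the fact that the Kronecker factors decouple. First I would observe that $P=P^{(2)}\otimes P^{(1)}$ is itself the orthogonal projector onto a suitable range, because each $P^{(i)}=I-V^{(i)}V^{(i)T}$ is the orthogonal projector onto ${\mathcal R}(V^{(i)})^\perp$, and the Kronecker product of orthogonal projectors is again an orthogonal projector. Since $\widehat{A}=PA=(P^{(2)}A^{(2)})\otimes(P^{(1)}A^{(1)})$, and $P^{(i)}A^{(i)}$ has range contained in ${\mathcal R}(V^{(i)})^\perp$ by Proposition \ref{prop2}, the matrix $\widehat{A}$ lies in ${\mathcal B}$; this establishes membership.

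The cleanest route for the optimality claim is to verify the same three characterizing conditions used in the proof of Proposition \ref{prop1}, now adapted to the range constraint rather than the null-space constraint. Condition (1) is membership, just argued. For condition (2), if $A\in{\mathcal B}$ then ${\mathcal R}(A^{(i)})\perp{\mathcal R}(V^{(i)})$, so $V^{(i)T}A^{(i)}=0$, whence $P^{(i)}A^{(i)}=A^{(i)}-V^{(i)}(V^{(i)T}A^{(i)})=A^{(i)}$ and therefore $\widehat{A}=A$. For condition (3), I would compute $\langle B,A-\widehat{A}\rangle=\mathrm{trace}(B^T A)-\mathrm{trace}(B^T\widehat{A})$ for arbitrary $B=B^{(2)}\otimes B^{(1)}\in{\mathcal B}$, expand $\widehat{A}=\widehat{A}^{(2)}\otimes\widehat{A}^{(1)}$ with $\widehat{A}^{(i)}=A^{(i)}-V^{(i)}V^{(i)T}A^{(i)}$, and use the Kronecker identity $(X\otimes Y)^T(Z\otimes W)=(X^TZ)\otimes(Y^TW)$ together with $\mathrm{trace}(X\otimes Y)=\mathrm{trace}(X)\,\mathrm{trace}(Y)$ to split the trace into products of factor-wise traces, exactly as in Proposition \ref{prop1}. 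The terms that survive all involve a factor $V^{(i)T}B^{(i)}=0$ (since $B\in{\mathcal B}$ forces ${\mathcal R}(B^{(i)})\perp{\mathcal R}(V^{(i)})$, i.e., $V^{(i)T}B^{(i)}=0$), and the cyclic property of the trace then kills the cross terms, giving orthogonality.

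These three conditions suffice: writing any competitor $B\in{\mathcal B}$ as $B=\widehat{A}+(B-\widehat{A})$ and noting $B-\widehat{A}\in{\mathcal B}$ by linearity of ${\mathcal B}$, condition (3) gives $\langle B-\widehat{A},A-\widehat{A}\rangle=0$, so the Pythagorean identity yields $\|A-B\|_F^2=\|A-\widehat{A}\|_F^2+\|\widehat{A}-B\|_F^2\ge\|A-\widehat{A}\|_F^2$, which is minimality. Alternatively — and this is the shortest argument — I would mimic Proposition \ref{prop2}'s own proof and transpose: the transpose map is a Frobenius isometry, $(P A)^T=A^T P$ (projectors being symmetric), and ${\mathcal B}^T$ is precisely the subspace of Kronecker products whose factors have \emph{null space} containing ${\mathcal R}(V^{(i)})$, so the range-constrained nearness problem for $A$ becomes the null-space-constrained nearness problem for $A^T$ already solved by Proposition \ref{prop1}, with $\widehat{A}^T=A^T P$ optimal there. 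The main obstacle, such as it is, is purely bookkeeping: keeping the Kronecker transpose/trace identities straight and confirming that the range condition ${\mathcal R}(B^{(i)})\perp{\mathcal R}(V^{(i)})$ is equivalent to $V^{(i)T}B^{(i)}=0$ in the same clean way that the null-space condition was equivalent to $B^{(i)}V^{(i)}=0$ in Proposition \ref{prop1}. I expect the transpose reduction to be the most economical and least error-prone to write.
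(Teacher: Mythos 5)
Your proposal is correct and follows essentially the same route as the paper, whose proof of Corollary \ref{cor1} is precisely the remark that it follows from Proposition \ref{prop1} (via the three-condition/orthogonality argument you adapt) or from Proposition \ref{prop2} (via the transpose reduction you identify as the most economical). The only quibble is that the set of Kronecker products is not literally closed under subtraction, so in the Pythagorean step one should apply condition (3) to $B$ and to $\widehat{A}$ separately and use bilinearity of the inner product rather than asserting $B-\widehat{A}\in{\mathcal B}$; this looseness is inherited from the paper's own phrasing and does not affect the argument.
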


\begin{proof}
The result can be shown by applying Propositions \ref{prop1} or \ref{prop2}.
\end{proof}

Example 2.3. Let $L_2$ be defined by \eqref{L2} and $\widetilde{L}_2$ by \eqref{L2square}.
Corollary \ref{cor1} shows that a closest matrix to
$\widetilde{L}=\widetilde{L}_2\otimes \widetilde{L}_2$ with range in
${\mathcal R}(L_2\otimes L_2)$ is
\[
L=P_2\widetilde{L}_2\otimes P_2\widetilde{L}_2,
\]
where $P_2={\rm diag}[0,1,1,\ldots,1,0]\in\R^{n\times n}$. $~~~\Box$

Example 2.4. Let the matrices $L_1$ and $\widetilde{L}_1$ be given by \eqref{L1} and
\eqref{L1square}. It follows from Corollary \ref{cor1} that a closest matrix
to $\widetilde{L}=\widetilde{L}_1\otimes\widetilde{L}_1$ with range in
${\mathcal R}(L_1\otimes L_1)$ is given by
\[
L=P_1\widetilde{L}_1\otimes P_1\widetilde{L}_1,
\]
where $P_1={\rm diag}[1,1,\ldots,1,0]\in\R^{n\times n}$. $~~~\Box$

Using \eqref{Ktensor} and \eqref{Ltensor}, the Tikhonov regularization problem
\eqref{eq:tik} can be expressed as
\begin{equation}\label{scal:tik1}
\min_{{\sbx}\in\R^N}\left\{\|(K^{(2)} \otimes K^{(1)})\bx-\bb\|^2+
\mu\|(L^{(2)} \otimes L^{(1)})\bx\|^2\right\}.
\end{equation}

It is convenient to introduce the operator $\mathrm{vec}$, which transforms a matrix
$Y\in{\R}^{n\times n}$ to a vector of size $n^2$ by stacking the columns of $Y$. Let $A$,
$B$, and $Y$, be matrices of commensurate sizes. Then
\[
\mathrm{vec}(AYB) = (B^T\otimes A) \mathrm{vec}(Y);
\]
see, e.g., \cite{HJbook} for operations on matrices with Kronecker product structure. We
can apply this identity to express \eqref{scal:tik1} in the form
\begin{equation}\label{blkeq:tik1}
 \min_{X\in\R^{n\times n}}\left\{\|K^{(1)}XK^{(2)T}-B\|_F^2+
 \mu\|L^{(1)} XL^{(2)T}\|_F^2\right\},
\end{equation}
where the matrix $B\in{\R}^{n\times n}$ satisfies $\bb=\mathrm{vec}(B)$.

Let the regularization matrices in \eqref{blkeq:tik1} be of the forms
\begin{equation}\label{L1L2}
L^{(1)}=P^{(1)}\widetilde{L}^{(1)},\qquad L^{(2)}=P^{(2)}\widetilde{L}^{(2)},
\end{equation}
where the matrices $\widetilde{L}^{(i)}\in\R^{n\times n}$ are nonsingular and the
$P^{(i)}$ are orthogonal projectors. We easily can transform \eqref{blkeq:tik1} to a form
with an orthogonal projector regularization matrix,
\begin{equation}\label{blkeq:Ptik2}
\min_{Y\in\R^{n\times s}}\left\{\|K_1^{(1)}YK_1^{(2)T}-B\|_F^2+
\mu\|P^{(1)}Y P^{(2)}\|_F^2\right\},
\end{equation}
where
\[
K_1^{(i)}=K^{(i)}(\widetilde{L}^{(i)})^{-1},\qquad i=1,2.
\]

We will solve \eqref{blkeq:Ptik2} by an iterative method. The structure of the
minimization problem makes it convenient to apply an iterative method based on the global
Arnoldi process, which was introduced and first analyzed by Jbilou et al. \cite{JMS,JST}.
We refer to matrices with many more rows than columns as ``block vectors''. The block
vectors $U,W\in\R^{N\times n}$ are said to be $F$-\emph{orthogonal} if
$\langle U,W\rangle = 0$; they are $F$-\emph{orthonormal} if in addition
$\|U\|_F=\|W\|_F=1$.

The application of $k$ steps of the global Arnoldi method to the solution of 
\eqref{blkeq:Ptik2} yields an $F$-orthonormal basis $\{V_1,V_2,\ldots,V_k\}$ of block 
vectors $V_j$ for the block Krylov subspace
\begin{equation}\label{gkryl}
{\mathcal K}_k={\rm span}\{B,K_1^{(1)}BK_1^{(2)T},\ldots,
(K_1^{(1)})^{k-1}B(K_1^{(2)T})^{k-1}\}.
\end{equation}
In particular $V_1=B/\|B\|_F$. The use of the global Arnoldi method to the solution of
\eqref{blkeq:Ptik2} is mathematically equivalent to applying a standard Arnoldi method to
\eqref{scal:tik1}. An advantage of the global Arnoldi method is that it can be implemented
by using matrix-matrix operations, while the standard Arnoldi method applies matrix-vector
and vector-vector operations. This can lead to faster execution of the global Arnoldi
method on many modern computers. Algorithm \ref{algo1} outlines the global Arnoldi method;
see \cite{JMS,JST} for further discussions of this and other block methods.

\begin{algorithm}
\DontPrintSemicolon
\nl compute $V_1=B/\|B\|_F$\;
\nl for $j=1,2,\dots, k$ compute \;
\nl $\qquad $  $V=K_1^{(1)}V_j$  \;
\nl $\qquad $  $V=VK_1^{(2)T}$  \;
\nl  $\qquad $ for $i=1,2,\dots, j$ \;
\nl  $\qquad \qquad$   $h_{i,j}=\langle V,V_i\rangle$ \;
\nl  $\qquad \qquad$   $V=V-h_{i,j}V_i$ \;
\nl  $\qquad $  end\;
\nl  $\qquad $  $h_{j+1,j}=\|V\|_F$ \;
\nl  $\qquad $ if $h_{j+1,j}=0\;\;$ stop \;
\nl  $\qquad $  $V_{j+1}=V/h_{j+1,j}$  \;
\nl end \;
\nl construct the $n\times kn$ matrix $\widehat{V}_k=[V_1,\dots, V_k]$ with
$F$-orthonormal block columns $V_j$. The block columns span the space \eqref{gkryl} \;
\nl construct the $(k+1)\times k$ Hessenberg matrix
$\widetilde{H}_k=[h_{i,j}]_{i=1,2,\dots, k+1,j=1,2,\dots, k}$\;
\caption{Global Arnoldi for computing an $F$-orthonormal basis for \eqref{gkryl}}\label{algo1}
\end{algorithm}

We determine an approximate solution of \eqref{blkeq:Ptik2} in the global Arnoldi subspace
\eqref{gkryl}. This is described by Algorithm \ref{algo2} for a given $\mu>0$. The
solution subspace \eqref{gkryl} is independent of the orthogonal projectors that determine
the regularization term in \eqref{blkeq:Ptik2}. This approach to generate a solution
subspace for the solution of Tikhonov minimization problems in general form was first
discussed in \cite{HR}; see also \cite{GNR} for examples.

\begin{algorithm}
\DontPrintSemicolon
\nl construct $\widehat{V}_k=[V_1,V_2,\ldots,V_k]$ and $\widetilde{H}_k$ using Algorithm
\ref{algo1}\;
\nl solve for a given $\mu>0$,
\[
\min_{\sby\in\mathbb{R}^k}\left\{\left\|\widetilde{H}_k\by-\|B\|_F\,\be_1\right\|^2+\mu
\left\|\sum_{i=1}^k y_iP^{(1)}V_iP^{(2)}\right\|_F^2\right\},
\]
where $\be_1=[1,0,\ldots,0]^T\in\R^{k+1}$ and $\by=[y_1,y_2,\ldots,y_k]^T$\;
\nl compute  $Y_{\mu,k}=\sum_{i=1}^k V_i y_i$\;
\caption{Tikhonov regularization based on the global Arnoldi process}\label{algo2}
\end{algorithm}

The discrepancy principle is a popular approach to determine the regularization parameter
$\mu$ when a bound $\varepsilon$ for the norm of the error $\be$ in $\bb$ is known, i.e.,
$\|\be\|\leq\varepsilon$. It prescribes that $\mu>0$ be chosen so that the computed
approximate solution $Y_{\mu,k}$ of \eqref{blkeq:Ptik2} satisfies
\begin{equation}\label{nonlin}
\|K_2^{(1)}Y_{\mu,k}K_2^{(2)T}-B\|_F=\eta\varepsilon,
\end{equation}
where $\eta\geq 1$ is a user-chosen constant independent of $\varepsilon$. The nonlinear
equation \eqref{nonlin} for $\mu$ can be solved by a variety of methods such as Newton's
method; see \cite{HR} for a discussion.

Finally, we note that the regularization matrices of this section also can be applied when
the matrix $K$ in \eqref{eq:tik} does not have a Kronecker product structure
\eqref{Ktensor}. Let ${\bx}={\rm vec}(X)$. Then the matrix expression in the penalty term
of \eqref{blkeq:Ptik2} can be written as
\[
P^{(1)}\widetilde{L}^{(1)}X\widetilde{L}^{(2)T}P^{(2)}=
((P^{(2)}\widetilde{L}^{(2)})\otimes(P^{(1)}\widetilde{L}^{(1)})){\bx}=
(P^{(2)}\otimes P^{(1)})(\widetilde{L}^{(2)}\otimes\widetilde{L}^{(1)}){\bx}.
\]
The analogue of the minimization problem \eqref{blkeq:Ptik2} therefore can be expressed as
\begin{equation}\label{eqvec}
\min_{{\sbx}\in\R^N}\left\{\|K\bx-\bb\|^2+\mu\|(P^{(2)}\otimes P^{(1)})
(\widetilde{L}^{(2)}\otimes\widetilde{L}^{(1)}){\bx}\|^2\right\}.
\end{equation}
The matrix $\widetilde{L}^{(2)}\otimes\widetilde{L}^{(1)}$ is invertible; we have
$(\widetilde{L}^{(2)}\otimes\widetilde{L}^{(1)})^{-1}=
(\widetilde{L}^{(2)})^{-1}\otimes(\widetilde{L}^{(1)})^{-1}$. It follows that the problem
\eqref{eqvec} can be transformed to
\begin{equation}\label{eqvec2}
\min_{{\sby}\in\R^N}\left\{\|K(
(\widetilde{L}^{(2)})^{-1}\otimes(\widetilde{L}^{(1)})^{-1})\by-\bb\|^2+
\mu\|(P^{(2)}\otimes P^{(1)})\by\|^2\right\}.
\end{equation}

The matrix $P^{(2)}\otimes P^{(1)}$ is an orthogonal projector. It is described in
\cite{MRS} how Tikhonov regularization problems with a regularization term that is
determined by an orthogonal projector with a low-dimensional null space easily can be
transformed to standard form. However,
${\rm dim}({\mathcal N}(P^{(2)}\otimes P^{(1)}))\geq n$, which generally is quite large in
problems of interest to us. It is therefore impractical to transform the Tikhonov
minimization problem \eqref{eqvec2} to standard form. We can solve \eqref{eqvec2}, e.g.,
by generating a (standard) Krylov subspace determined by the matrix
$K((\widetilde{L}^{(2)})^{-1}\otimes(\widetilde{L}^{(1)})^{-1})$ and vector $\bb$,
similarly as described in \cite{HR}. When the matrix $K$ is square, the Arnoldi process
can be applied to generate a solution subspace; when $K$ is rectangular, partial
Golub--Kahan bidiagonalization of $K$ can be used. The latter approach requires
matrix-vector product evaluations with both $K$ and $K^T$; see \cite{HR} for further
details.

\section{Regularization matrices for problems in higher space-dimensions}\label{sec3}
Proposition \ref{prop1} can be extended to higher space-dimensions. In addition to
allowing $d\geq 2$ space-dimensions, we remove the requirement that all blocks be square
and of equal size.

\begin{proposition}\label{prop3}
Let $V_{\ell_i}^{(i)}\in\R^{n_i\times\ell_i}$ have $1\leq\ell_i<n_i$ orthonormal columns for
$i=1,2,\ldots,d$, and let ${\mathcal B}$ denote the subspace of matrices of the form
\[
B=B^{(d)}\otimes B^{(d-1)}\otimes\cdots \otimes B^{(1)},
\]
where the null space of $B^{(i)}\in\R^{p_i\times n_i}$ contains ${\mathcal R}(V_{\ell_i}^{(i)})$
for all $i$. Let $I_k$ denote the identity matrix of order $k$ and define the orthogonal
projectors
\begin{equation}\label{Pi}
P=P^{(d)}\otimes P^{(d-1)}\otimes\cdots\otimes P^{(1)},
\quad P^{(i)}=I_{n_i}-V_{\ell_i}^{(i)}V_{\ell_i}^{(i)T},\quad i=1,2,\ldots,d.
\end{equation}
Then the matrix $\widehat{A}=AP$ is a closest matrix to
$A=A^{(d)}\otimes A^{(d-1)}\otimes\cdots\otimes A^{(1)}$ in ${\mathcal B}$ in the
Frobenius norm, where $A^{(i)}\in\R^{p_i\times n_i}$, $i=1,2,\ldots,d$.
\end{proposition}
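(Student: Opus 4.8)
This is the multi-factor generalization of Proposition 2.1, so the natural plan is to imitate that proof's three-property characterization and extend each step from two Kronecker factors to $d$ factors. I would verify that $\widehat{A}=AP$ (i) lies in $\mathcal{B}$, (ii) fixes every element of $\mathcal{B}$ already of the prescribed form (i.e.\ $\widehat{A}=A$ when $A\in\mathcal{B}$), and (iii) makes $A-\widehat{A}$ orthogonal to $\mathcal{B}$ in the Frobenius inner product \eqref{globip}. Since $\mathcal{B}$ is a subspace and $\widehat{A}\in\mathcal{B}$, property (iii)---that the residual $A-\widehat{A}$ is orthogonal to all of $\mathcal{B}$---is exactly the statement that $\widehat{A}$ is the Frobenius-orthogonal projection of $A$ onto $\mathcal{B}$, which gives the closest-matrix conclusion.

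\textbf{The three steps.} For (i), I would use the mixed-product rule for Kronecker products together with $P^{(i)}V_{\ell_i}^{(i)}=(I_{n_i}-V_{\ell_i}^{(i)}V_{\ell_i}^{(i)T})V_{\ell_i}^{(i)}=0$ (since the columns of $V_{\ell_i}^{(i)}$ are orthonormal), exactly as in Proposition \ref{prop1}: evaluating $\widehat{A}$ against $V_{\ell_d}^{(d)}\otimes\cdots\otimes V_{\ell_1}^{(1)}$ shows that the null space of the $i$-th factor of $\widehat{A}=A^{(d)}P^{(d)}\otimes\cdots\otimes A^{(1)}P^{(1)}$ contains $\mathcal{R}(V_{\ell_i}^{(i)})$, so $\widehat{A}\in\mathcal{B}$. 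For (ii), $A\in\mathcal{B}$ means $A^{(i)}V_{\ell_i}^{(i)}=0$ for each $i$, whence $A^{(i)}P^{(i)}=A^{(i)}-A^{(i)}V_{\ell_i}^{(i)}V_{\ell_i}^{(i)T}=A^{(i)}$, and the Kronecker product of these identities gives $\widehat{A}=A$. Property (ii) is not strictly needed for the minimization conclusion once (i) and (iii) are in hand, but it is worth recording for continuity with Proposition \ref{prop1}.

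\textbf{The main obstacle.} The real work is property (iii), and this is where the $d$-fold case is genuinely more involved than the two-fold case. Writing $P^{(i)}=I_{n_i}-V_{\ell_i}^{(i)}V_{\ell_i}^{(i)T}$ and expanding $P=P^{(d)}\otimes\cdots\otimes P^{(1)}$ produces $2^d$ terms, indexed by which factors keep the $I$ part and which keep the $-V_{\ell_i}^{(i)}V_{\ell_i}^{(i)T}$ part. For $d=2$ this expansion yields the four-term telescoping computation displayed in the proof of Proposition \ref{prop1}. For general $d$, I would compute $\langle B,A-\widehat{A}\rangle=\langle B,A(I-P)\rangle$ and factor the Frobenius inner product across the Kronecker structure as a product $\prod_{i=1}^d \textrm{trace}(B^{(i)T}A^{(i)}(\,\cdot\,))$ using the identity $\textrm{trace}((X_d\otimes\cdots\otimes X_1))=\prod_i\textrm{trace}(X_i)$ for Kronecker products. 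The key algebraic fact, which is the crux, is that for each $i$ the hypothesis $B^{(i)}V_{\ell_i}^{(i)}=0$ gives $B^{(i)T}A^{(i)}V_{\ell_i}^{(i)}V_{\ell_i}^{(i)T}$ vanishing under the trace by the cyclic property: $\textrm{trace}(B^{(i)T}A^{(i)}V_{\ell_i}^{(i)}V_{\ell_i}^{(i)T})=\textrm{trace}(V_{\ell_i}^{(i)T}B^{(i)T}A^{(i)}V_{\ell_i}^{(i)})=0$. Consequently every one of the $2^d-1$ terms in $I-P$ that contains at least one $V_{\ell_i}^{(i)}V_{\ell_i}^{(i)T}$ factor contributes a trace-factor of the form $\textrm{trace}(B^{(i)T}A^{(i)}V_{\ell_i}^{(i)}V_{\ell_i}^{(i)T})=0$, so the whole inner product vanishes. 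Rather than track all $2^d$ terms explicitly, I would phrase this cleanly: since each factor satisfies $\langle B^{(i)},A^{(i)}P^{(i)}\rangle=\langle B^{(i)},A^{(i)}\rangle$ (by the same cyclic-trace argument applied factorwise), and the global Frobenius inner product factors multiplicatively over Kronecker products, we get $\langle B,\widehat{A}\rangle=\prod_i\langle B^{(i)},A^{(i)}P^{(i)}\rangle=\prod_i\langle B^{(i)},A^{(i)}\rangle=\langle B,A\rangle$, i.e.\ $\langle B,A-\widehat{A}\rangle=0$. This multiplicative factorization is the device that avoids the combinatorial $2^d$ expansion and makes the higher-dimensional proof no harder than the two-dimensional one.
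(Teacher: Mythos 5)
Your proposal is correct and follows the same route as the paper, whose entire proof of Proposition \ref{prop3} is the remark that it is ``a straightforward modification of the proof of Proposition \ref{prop1}''; you carry out exactly that modification, verifying the same three properties. Your factorwise identity $\langle B^{(i)},A^{(i)}P^{(i)}\rangle=\langle B^{(i)},A^{(i)}\rangle$ combined with the multiplicativity of the trace over Kronecker products is a clean way to organize the orthogonality step that the paper handles by explicit expansion in the $d=2$ case, but it is the same underlying cyclic-trace computation.
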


\begin{proof}
The proof is a straightforward modification of the proof of Proposition \ref{prop1}.
\end{proof}

Let $\widetilde{L}^{(1)},\widetilde{L}^{(2)},\ldots,\widetilde{L}^{(d)}$ be a sequence
of square nonsingular matrices, and let $L^{(1)},L^{(2)},\ldots,L^{(d)}$ be
regularization matrices with desirable null spaces. It follows from Proposition
\ref{prop3} that a closest matrix to
\[
\widetilde{L}=\widetilde{L}^{(d)}\otimes\widetilde{L}^{(d-1)}\otimes\cdots\otimes
\widetilde{L}^{(1)}
\]
with null space ${\mathcal N}(L^{(d)}\otimes L^{(d-1)}\otimes\cdots\otimes L^{(1)})$ is
\[
L=\widetilde{L}^{(d)}P^{(d)}\otimes\widetilde{L}^{(d-1)}P^{(d-1)}\otimes\cdots\otimes
\widetilde{L}^{(1)}P^{(1)},
\]
where the orthogonal projectors $P^{(i)}$ are defined by \eqref{Pi} and the matrix
$V_{\ell_i}^{(i)}\in\R^{n_i\times\ell_i}$ has $1\leq\ell_i<n_i$ orthonormal columns that span
${\mathcal N}(L^{(i)})$ for $i=1,2,\ldots,d$.

The following result generalizes Corollary \ref{cor1} to higher space-dimensions and
to rectangular blocks of different sizes.
\begin{proposition}\label{prop4}
Let $V_{\ell_i}^{(i)}\in\R^{n_i\times\ell_i}$ have $1\leq\ell_i<n_i$ orthonormal columns for
$i=1,2,\ldots,d$, and let ${\mathcal B}$ denote the subspace of matrices of the form
\[
B=B^{(d)}\otimes B^{(d-1)}\otimes\cdots \otimes B^{(1)},
\]
where the range of $B^{(i)}\in\R^{p_i\times n_i}$  is orthogonal to
${\mathcal R}(V_{\ell_i}^{(i)})$ for  all $i$.
Let $P$  be defined by \eqref{Pi}. Then the matrix $\widehat{A}=PA$ is a closest matrix to
$A=A^{(d)}\otimes A^{(d-1)}\otimes\cdots\otimes A^{(1)}$ in ${\mathcal B}$ in the
Frobenius norm, where $A^{(i)}\in\R^{p_i\times n_i}$, $i=1,2,\ldots,d$.
\end{proposition}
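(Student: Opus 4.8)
The plan is to reduce Proposition \ref{prop4} to Proposition \ref{prop3} by transposition, exactly as Corollary \ref{cor1} was obtained from Propositions \ref{prop1} and \ref{prop2}. The starting observation is that, for each $i$, the range of $B^{(i)}$ is orthogonal to ${\mathcal R}(V_{\ell_i}^{(i)})$ if and only if ${\mathcal R}(V_{\ell_i}^{(i)})\subseteq\Null{B^{(i)T}}$. Hence the membership condition defining ${\mathcal B}$ in Proposition \ref{prop4} becomes, after transposing every block, precisely the null-space condition defining the subspace in Proposition \ref{prop3}.

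First I would introduce the transposed subspace ${\mathcal B}^T=\{B^T:B\in{\mathcal B}\}$. Using the identity $(B^{(d)}\otimes\cdots\otimes B^{(1)})^T=B^{(d)T}\otimes\cdots\otimes B^{(1)T}$, which preserves the ordering of the factors, the set ${\mathcal B}^T$ consists exactly of the Kronecker products whose $i$-th block has a null space containing ${\mathcal R}(V_{\ell_i}^{(i)})$. Applying Proposition \ref{prop3} to the matrix $A^T=A^{(d)T}\otimes\cdots\otimes A^{(1)T}$ then shows that $A^TP$ is a closest matrix to $A^T$ in ${\mathcal B}^T$ in the Frobenius norm.

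Next I would transpose back. Because each $P^{(i)}$ is a symmetric orthogonal projector and Kronecker products of symmetric matrices are symmetric, the projector $P$ satisfies $P^T=P$, so $(A^TP)^T=PA$. The Frobenius norm is invariant under transposition, giving
\[
\|A^TP-A^T\|_F=\|PA-A\|_F,
\]
and, more generally, $\|B^T-A^T\|_F=\|B-A\|_F$ for every $B\in{\mathcal B}$. Since transposition is a Frobenius-isometric bijection between ${\mathcal B}$ and ${\mathcal B}^T$, the minimality of $A^TP$ over ${\mathcal B}^T$ established above transfers verbatim to minimality of $\widehat{A}=PA$ over ${\mathcal B}$, which is the assertion.

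The steps are all routine, so the only point deserving care is the bookkeeping under transposition: one must check that ${\mathcal B}^T$ is genuinely the subspace to which Proposition \ref{prop3} applies, i.e., that the range-orthogonality conditions transpose exactly into the stated null-space conditions, and that $P^T=P$, so that the optimal transposed matrix $A^TP$ really transposes to $PA$. As an alternative that avoids transposition, one could instead mimic the three-condition argument in the proof of Proposition \ref{prop1}: verify that $PA\in{\mathcal B}$, that $PA=A$ whenever $A\in{\mathcal B}$, and that $\langle B,A-PA\rangle=0$ for every $B\in{\mathcal B}$, the last identity following from the cyclic property of the trace together with $V_{\ell_i}^{(i)T}B^{(i)}=0$.
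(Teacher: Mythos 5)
Your proof is correct and follows essentially the route the paper intends: the paper's own proof of Proposition~\ref{prop4} is just the remark that it follows by ``modifying the proof of Propositions~\ref{prop1} or \ref{prop2}'', and your transposition reduction to Proposition~\ref{prop3} is precisely the Proposition~\ref{prop2}-style argument lifted to $d$ factors, while the alternative you sketch at the end is the Proposition~\ref{prop1}-style three-condition argument. The only point worth flagging is a bookkeeping issue inherited from the statement itself rather than from your argument: for the range-orthogonality condition on $B^{(i)}\in\R^{p_i\times n_i}$ to parse when $p_i\neq n_i$, the matrices $V_{\ell_i}^{(i)}$ must lie in $\R^{p_i\times\ell_i}$ and the projectors must be $P^{(i)}=I_{p_i}-V_{\ell_i}^{(i)}V_{\ell_i}^{(i)T}$, which your transposed application of Proposition~\ref{prop3} implicitly (and correctly) assumes.
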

\begin{proof}
The result can be shown by modifying the proof of Propositions \ref{prop1} or \ref{prop2}.
\end{proof}

Let $L^{(1)},L^{(2)},\ldots,L^{(d)}$ be  a sequence of
regularization matrices with desirable ranges, and let $\widetilde{L}^{(1)},\widetilde{L}^{(2)},\ldots,\widetilde{L}^{(d)}$ 
be  full rank matrices.  It follows from Proposition
\ref{prop4} that a closest matrix to
\[
\widetilde{L}=\widetilde{L}^{(d)}\otimes\widetilde{L}^{(d-1)}\otimes\cdots\otimes
\widetilde{L}^{(1)}
\]
with range in  ${\mathcal R}(L^{(d)}\otimes L^{(d-1)}\otimes\cdots\otimes L^{(1)})$ is
\[
L=P^{(d)}\widetilde{L}^{(d)}\otimes P^{(d-1)}\widetilde{L}^{(d-1)}\otimes\cdots\otimes
P^{(1)}\widetilde{L}^{(1)},
\]
where the orthogonal projectors $P^{(i)}$ are defined by \eqref{Pi} and the matrix
$V_{\ell_i}^{(i)}\in\R^{n_i\times\ell_i}$ has $1\leq\ell_i<n_i$ orthonormal columns that span
${\mathcal N}(L^{(i)})$ for $i=1,2,\ldots,d$.

We conclude this section with an extension of \eqref{blkeq:Ptik2} to higher
space-dimensions and assume that the problem has a nested tensor structure, i.e.,
\[
K^{(i)} = K^{(i,2)} \otimes K^{(i,1)},
\]
where $K^{(1,i)}\in\R^{n_i\times n_i}$, $K^{(2,i)}\in\R^{s_i\times s_i}$, $i=1,2$, and
that
\[
B=B^{(2)} \otimes B^{(1)},
\]
where $B^{(i)}\in\R^{n_i\times s_i}$ for $i=1,2$. The minimization problem \eqref{eq:sys}
with
\[
K=K^{(2,2)} \otimes K^{(2,1)}\otimes K^{(1,2)} \otimes K^{(1,1)}
\]
and $\bb=\mathrm{vec}(B)$ reads
\[
\min_{X\in\R^{n\times s}}\left\{\|(K^{(1,2)}\otimes K^{(1,1)})X(K^{(2,2)T}\otimes
K^{(2,1)T})-B^{(2)} \otimes B^{(1)}\|_F^2\right\}.
\]
Let the regularization matrices have a nested tensor structure
\[
L^{(i)} = L^{(i,2)} \otimes L^{(i,1)},\quad i=1,2.
\]
Then penalized least-squares problem that has to be solved is of the form
\begin{equation}\label{prb1}
\begin{array}{rcl}
\displaystyle{\min_{X\in\R^{n\times s}}}
\{\|(K^{(1,2)}\otimes K^{(1,1)})X(K^{(2,2)T}\otimes K^{(2,1)T})-B^{(2)}
\otimes B^{(1)}\|_F^2+\\ \mu\|(L^{(1,2)} \otimes L^{(1,1)}) X(L^{(2,2)T}
\otimes L^{(2,1)T}\|_F^2\}.
\end{array}
\end{equation}

If, moreover, the solution is separable of the form $X=X^{(2)} \otimes X^{(1)}$, where
$X^{(i)}\in\R^{n_i\times s_i}$ for $i=1,2$, then we obtain the minimization problem
\begin{equation}\label{prb2}
\begin{array}{rcl}
\displaystyle{
\min_{\substack{X^{(1)}\in\R^{n_1\times s_1}\\ X^{(2)}\in\R^{n_2\times s_2}}}}
\{\|(K^{(1,2)}X^{(2)}K^{(2,2)T})\otimes (K^{(1,1)}X^{(1)}K^{(2,1)T})-B^{(2)} \otimes
B^{(1)}\|_F^2+\\
\mu\|(L^{(1,2)}X^{(2)}L^{(2,2)T})\otimes (L^{(1,1)}X^{(1)}L^{(2,1)T})\|_F^2\}.
\end{array}
\end{equation}

When the regularization matrices are of the form
$L^{(i,j)}=P^{(i,j)}\widetilde{L}^{(i,j)}$, $1\leq i,j\leq 2$, where the $P^{(i,j)}$ are
orthogonal projectors and the $\widetilde{L}^{(i,j)}$ are square and invertible, the
minimization problems \eqref{prb1} and \eqref{prb2} can be transformed similarly as
equation \eqref{blkeq:tik1} was transformed into \eqref{blkeq:Ptik2}.

\section{Computed examples}\label{sec4}
We illustrate the performance of regularization matrices of the form 
$L = L^{(2)} \otimes L^{(1)}$ with $L^{(i)}=P^{(i)}\widetilde{L}^{(i)}$ or 
$L^{(i)}=\widetilde{L}^{(i)}P^{(i)}$ for $i = 1,2$, and compare with the regularization
matrices $L^{(i)}=\widetilde{L}^{(i)}$ for $i = 1,2$. The noise level is given by 
\[
\nu:=\frac{{\|E\|_F}}{\|\widehat{B}\|_F}.
\]
Here $E=B-\widehat{B}$ is the error matrix, where $B$ is the available error-contaminated 
matrix in (\ref{blkeq:tik1}) and $\widehat{B}$ is the associated unknown error-free matrix,
i.e., $\widehat{\bb}={\rm vec}(\widehat{B})$; cf. (\ref{consistent}). In all examples, the 
entries of the matrix $E$ are normally distributed with zero mean and are scaled to 
correspond to a specified noise level. We let $\eta=1.01$ in (\ref{nonlin}) in all 
examples. The quality of computed approximate solutions $X_{\mu.k}$ of (\ref{blkeq:tik1}) 
is measured with the relative error norm
\[
e_k:=\frac{{\|X_{\mu,k}-\widehat{X}\|_F}}{\|\widehat{X}\|_F}.
\]
All computations were carried out in MATLAB R2017a with about $15$ significant decimal 
digits on a laptop computer with an Intel Core i7-6700HQ CPU @ 2.60GHz processor and 16GB 
RAM.

\begin{table}[tbh]
\begin{center}
\begin{tabular}{cccc}
 \hline
regularization & number of & CPU time & relative \\
matrix & iterations $k$ & in seconds & error $e_k$ \\
\hline
&& $\nu=1\cdot10^{-3}$&\\
\hline
$\widetilde{L}^{(1)} \otimes \widetilde{L}^{(1)}$                 & $1$ & $11.9 $ & $8.42 \cdot 10^{-2}$ \\
$P^{(1)}\widetilde{L}^{(1)} \otimes P^{(1)}\widetilde{L}^{(1)}$   & $1$ & $11.6 $ & $6.17 \cdot 10^{-2}$ \\
$\widetilde{L}^{(1)}P^{(1)} \otimes \widetilde{L}^{(1)}P^{(1)}$   & $2$ & $11.6 $ & $6.85 \cdot 10^{-2}$ \\
$\widetilde{L}^{(2)} \otimes \widetilde{L}^{(1)}$                 & $1$ & $12.3 $ & $9.69 \cdot 10^{-2}$ \\
$P^{(2)}\widetilde{L}^{(2)} \otimes P^{(1)}\widetilde{L}^{(1)}$   & $1$ & $11.7 $ & $6.17 \cdot 10^{-2}$ \\
$\widetilde{L}^{(2)}P^{(2)} \otimes \widetilde{L}^{(1)}P^{(1)}$   & $1$ & $11.9 $ & $7.18 \cdot 10^{-2}$ \\
$\widetilde{L}^{(2)} \otimes \widetilde{L}^{(2)}$                 & $1$ & $12.1 $ & $1.05 \cdot 10^{-1}$ \\
$P^{(2)}\widetilde{L}^{(2)} \otimes P^{(2)}\widetilde{L}^{(2)}$   & $1$ & $12.1 $ & $6.17 \cdot 10^{-2}$ \\
$\widetilde{L}^{(2)}P^{(2)} \otimes \widetilde{L}^{(2)}P^{(2)}$   & $1$ & $11.8 $ & $8.55 \cdot 10^{-2}$ \\
\hline
&&noise level $\nu=1\cdot10^{-4}$&\\
\hline
$\widetilde{L}^{(1)} \otimes \widetilde{L}^{(1)}$                 & $8$ & $11.7 $ & $4.98 \cdot 10^{-2}$ \\
$P^{(1)}\widetilde{L}^{(1)} \otimes P^{(1)}\widetilde{L}^{(1)}$   & $1$ & $11.6 $ & $4.72 \cdot 10^{-2}$ \\
$\widetilde{L}^{(1)}P^{(1)} \otimes \widetilde{L}^{(1)}P^{(1)}$   & $8$ & $11.5 $ & $4.80 \cdot 10^{-2}$ \\
$\widetilde{L}^{(2)} \otimes \widetilde{L}^{(1)}$                 & $6$ & $12.1 $ & $5.14 \cdot 10^{-2}$ \\
$P^{(2)}\widetilde{L}^{(2)} \otimes P^{(1)}\widetilde{L}^{(1)}$   & $1$ & $11.7 $ & $4.72 \cdot 10^{-2}$ \\
$\widetilde{L}^{(2)}P^{(2)} \otimes \widetilde{L}^{(1)}P^{(1)}$   & $7$ & $11.9 $ & $4.84 \cdot 10^{-2}$ \\
$\widetilde{L}^{(2)} \otimes \widetilde{L}^{(2)}$                 & $5$ & $11.8 $ & $4.96 \cdot 10^{-2}$ \\
$P^{(2)}\widetilde{L}^{(2)} \otimes P^{(2)}\widetilde{L}^{(2)}$   & $1$ & $11.6 $ & $4.72 \cdot 10^{-2}$ \\
$\widetilde{L}^{(2)}P^{(2)} \otimes \widetilde{L}^{(2)}P^{(2)}$   & $6$ & $11.7 $ & $4.81 \cdot 10^{-2}$ \\
\hline
\end{tabular}
\end{center}
\caption{Example 4.1: Number of iterations, CPU time in seconds, and relative error $e_k$ 
in computed approximate solutions $X_{\mu,k}$ determined by Tikhonov regularization based 
on the global Arnoldi process for two noise levels and several regularization matrices.}\label{tab4.1}
\end{table}

\begin{figure}[h!tbp]
\centering
\includegraphics[height=5cm,width=7cm]{shaw150_RecoveredP1L1P1L110-3.eps}
\caption{Example 4.1: Computed approximate solution ${X}_{\mu,1}$ for noise level 
$\nu=1\cdot10^{-3}$ and regularization matrix 
$P^{(1)}\widetilde{L}^{(1)} \otimes P^{(1)}\widetilde{L}^{(1)}$ using the discrepancy 
principle.}\label{fig4.1}
\end{figure}

Example 4.1. Consider the Fredholm integral equation of the first kind in two 
space-dimensions,
\begin{equation}\label{shaw}
\int\int_{\Omega}\kappa(\tau,\sigma;x,y)f(x,y)dxdy=g(\tau,\sigma),\quad 
(\tau,\sigma)\in \Omega,
\end{equation}
where $\Omega=[-\pi/2,\pi/2]\times[-\pi/2,\pi/2]$. The kernel is given by
\[
\kappa_1(\tau,\sigma;x,y)=\kappa_1(\tau,x)\kappa_1(\sigma,y),\quad (\tau,\sigma),(x,y)
\in \Omega,
\]
where 
\[
\kappa(\tau,\sigma)=(\cos(\sigma)+\sin(\tau))^{2}\left(\frac{\sin(\xi)}{\xi}\right)^{2},\quad
\xi=\pi(\sin(\sigma)+\cos(\tau)).
\]

The right-hand side function is of the form
\[
g(\tau,\sigma)=h(\tau)h(\sigma),
\]
where $h(\sigma)$ is chosen so that the solution is the sum of two Gaussian functions and 
a constant. We use the MATLAB code {\sf shaw} from \cite{Hansen} to discretize (\ref{shaw}) 
by a Galerkin method with $150\times 150$ orthonormal box functions as test and trial 
functions. This code produces the matrix $K\in{\R}^{150\times 150}$ that approximates the
analogue of the integral operator \eqref{shaw} in one space-dimension, and a discrete 
approximate solution $\bx_1$ in one space-dimension. Adding the vector 
${\bn}_1=[1,1,\ldots,1]^T$ yields the vector $\widehat{\bx}_1\in{\R}^{150}$, from which we
construct the scaled discrete approximation $\widehat{X}=\widehat{\bx}_1\widehat{\bx}_1^T$ 
of the solution of (\ref{shaw}). The error-free right-hand side is computed by 
$\widehat{B}=K\widehat{X}K^T$. The error matrix $E\in{\R}^{150\times 150}$ models white 
Gaussian noise with noise levels $\nu = 1\cdot 10^{-3}$ and $\nu = 1\cdot 10^{-4}$. The 
data matrix $B$ in (\ref{blkeq:tik1}) is computed as $B=\widehat{B}+E$. The regularization 
matrices $L$ used are constructed like in Examples 2.1-2.4. We compare the performance of 
these regularization matrices to the performance of the nonsingular regularization matrices 
$L=\widetilde{L}^{(i)} \otimes \widetilde{L}^{(i)}$, $i=1,2$, and 
$L=\widetilde{L}^{(2)} \otimes \widetilde{L}^{(1)}$. The number of steps, $k$, of the 
global Arnoldi method is chosen as small as possible so that the discrepancy principle
(\ref{nonlin}) can be satisfied. The regularization parameter is determined by the 
discrepancy principle.

Table \ref{tab4.1} displays results obtained for the different regularization matrices and
noise levels. The table shows the regularization matrices
$P^{(i)}\widetilde{L}^{(i)} \otimes P^{(i)}\widetilde{L}^{(i)}$, $i=1,2$, as well as
$P^{(2)}\widetilde{L}^{(2)} \otimes P^{(1)}\widetilde{L}^{(1)}$ to yield the smallest 
relative errors. Moreover, the computation with these regularization matrices requires the 
least CPU time. Figure \ref{fig4.1} shows the computed approximate solution for the
noise level $\nu=1\cdot 10^{-3}$ when the regularization matrix 
$P^{(1)}\widetilde{L}^{(1)} \otimes P^{(1)}\widetilde{L}^{(1)}$ is used. The computed 
approximation cannot be visually distinguished from the desired exact solution 
$\widehat{X}$. We therefore do not show the latter. $~~~\Box$

\begin{table}[tbh]
\begin{center}
\begin{tabular}{cccc}
\hline
regularization & number of & CPU time & relative \\
matrix & iterations $k$ & in seconds & error $e_k$ \\
\hline
&&noise level $\nu=1\cdot10^{-2}$&\\
\hline
$\widetilde{L}^{(1)} \otimes \widetilde{L}^{(1)}$                 & $11$ & $18.9 $ & $7.90 \cdot 10^{-2}$ \\
$P^{(1)}\widetilde{L}^{(1)} \otimes P^{(1)}\widetilde{L}^{(1)}$   & $1$ & $19.6 $ &  $6.36 \cdot 10^{-2}$ \\
$\widetilde{L}^{(1)}P^{(1)} \otimes \widetilde{L}^{(1)}P^{(1)}$   & $11$ & $19.3 $ & $7.90 \cdot 10^{-2}$ \\
$\widetilde{L}^{(2)} \otimes \widetilde{L}^{(1)}$                 & $10$ & $19.5 $ & $8.39 \cdot 10^{-2}$ \\
$P^{(2)}\widetilde{L}^{(2)} \otimes P^{(1)}\widetilde{L}^{(1)}$   & $1$ & $18.9 $ &  $7.81 \cdot 10^{-2}$ \\
$\widetilde{L}^{(2)}P^{(2)} \otimes \widetilde{L}^{(1)}P^{(1)}$   & $10$ & $19.7 $ & $8.39 \cdot 10^{-2}$ \\
$\widetilde{L}^{(2)} \otimes \widetilde{L}^{(2)}$                 & $9$ & $19.7 $ &  $8.64 \cdot 10^{-2}$ \\
$P^{(2)}\widetilde{L}^{(2)} \otimes P^{(2)}\widetilde{L}^{(2)}$   & $1$ & $19.8 $ &  $7.81 \cdot 10^{-2}$ \\
$\widetilde{L}^{(2)}P^{(2)} \otimes \widetilde{L}^{(2)}P^{(2)}$   & $9$ & $19.0 $ &  $8.64 \cdot 10^{-2}$ \\
\hline
&&noise level $\nu=1\cdot10^{-3}$&\\
\hline
$\widetilde{L}^{(1)} \otimes \widetilde{L}^{(1)}$                 & $17$ & $22.0 $ & $1.58 \cdot 10^{-2}$ \\
$P^{(1)}\widetilde{L}^{(1)} \otimes P^{(1)}\widetilde{L}^{(1)}$   & $1$ & $22.0 $ &  $9.95 \cdot 10^{-3}$ \\
$\widetilde{L}^{(1)}P^{(1)} \otimes \widetilde{L}^{(1)}P^{(1)}$   & $17$ & $22.1 $ & $1.58 \cdot 10^{-2}$ \\
$\widetilde{L}^{(2)} \otimes \widetilde{L}^{(1)}$                 & $17$ & $23.5 $ & $1.53 \cdot 10^{-2}$ \\
$P^{(2)}\widetilde{L}^{(2)} \otimes P^{(1)}\widetilde{L}^{(1)}$   & $1$ & $22.0 $ &  $9.79 \cdot 10^{-3}$ \\
$\widetilde{L}^{(2)}P^{(2)} \otimes \widetilde{L}^{(1)}P^{(1)}$   & $17$ & $22.4 $ & $1.53 \cdot 10^{-2}$ \\
$\widetilde{L}^{(2)} \otimes \widetilde{L}^{(2)}$                 & $16$ & $22.5 $ & $2.04 \cdot 10^{-2}$ \\
$P^{(2)}\widetilde{L}^{(2)} \otimes P^{(2)}\widetilde{L}^{(2)}$   & $1$ & $21.5 $ &  $9.79 \cdot 10^{-3}$ \\
$\widetilde{L}^{(2)}P^{(2)} \otimes \widetilde{L}^{(2)}P^{(2)}$   & $16$ & $21.9 $ & $2.04 \cdot 10^{-2}$ \\
\hline
&&noise level $\nu=1\cdot10^{-4}$&\\
\hline
$\widetilde{L}^{(1)} \otimes \widetilde{L}^{(1)}$                 & $21$ & $44.2 $ & $2.38 \cdot 10^{-3}$ \\
$P^{(1)}\widetilde{L}^{(1)} \otimes P^{(1)}\widetilde{L}^{(1)}$   & $1$ & $44.1 $ &  $1.91 \cdot 10^{-3}$ \\
$\widetilde{L}^{(1)}P^{(1)} \otimes \widetilde{L}^{(1)}P^{(1)}$   & $21$ & $44.1 $ & $2.38 \cdot 10^{-3}$ \\
$\widetilde{L}^{(2)} \otimes \widetilde{L}^{(1)}$                 & $21$ & $45.2 $ & $2.35 \cdot 10^{-3}$ \\
$P^{(2)}\widetilde{L}^{(2)} \otimes P^{(1)}\widetilde{L}^{(1)}$   & $1$ & $44.9 $ &  $1.91 \cdot 10^{-3}$ \\
$\widetilde{L}^{(2)}P^{(2)} \otimes \widetilde{L}^{(1)}P^{(1)}$   & $21$ & $46.8 $ & $2.35 \cdot 10^{-3}$ \\
$\widetilde{L}^{(2)} \otimes \widetilde{L}^{(2)}$                 & $21$ & $45.4 $ & $2.33 \cdot 10^{-3}$ \\
$P^{(2)}\widetilde{L}^{(2)} \otimes P^{(2)}\widetilde{L}^{(2)}$   & $1$ & $44.7 $ &  $1.91 \cdot 10^{-3}$ \\
$\widetilde{L}^{(2)}P^{(2)} \otimes \widetilde{L}^{(2)}P^{(2)}$   & $21$ & $44.9 $ & $2.33 \cdot 10^{-3}$ \\
\hline
\end{tabular}
\end{center}
\caption{Example 4.2: Number of iterations, CPU time in seconds, and relative error $e_k$ 
in computed approximate solutions $X_{\mu,k}$ determined by Tikhonov regularization based 
on the global Arnoldi process for two noise levels and several regularization matrices.}\label{tab4.3}
\end{table}

\begin{figure}[h!tbp]
\centering
\subfigure[]{\includegraphics[height=5cm,width=5cm]{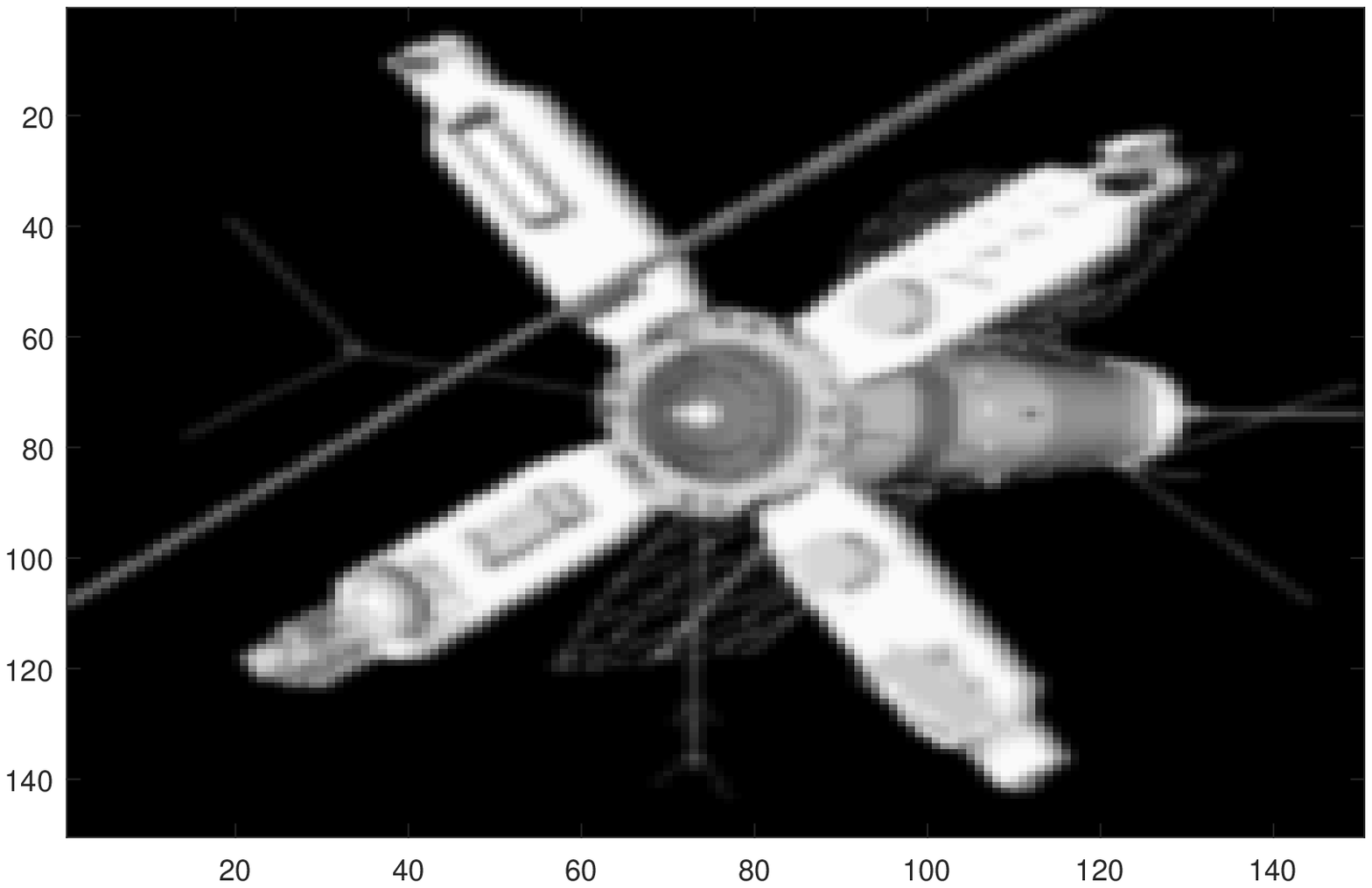}}
\subfigure[]{\includegraphics[height=5cm,width=5cm]{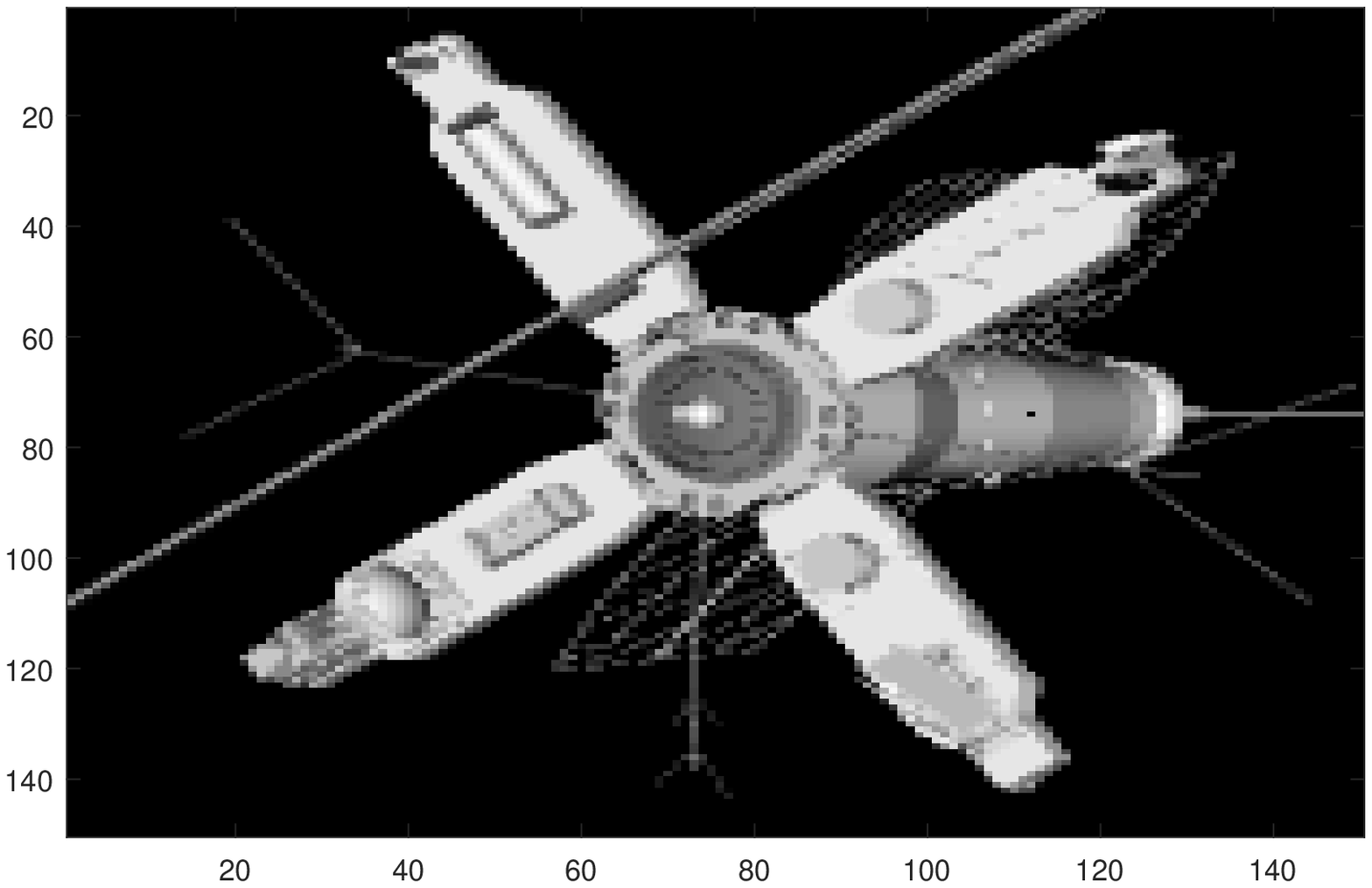}}
\subfigure[]{\includegraphics[height=5cm,width=5cm]{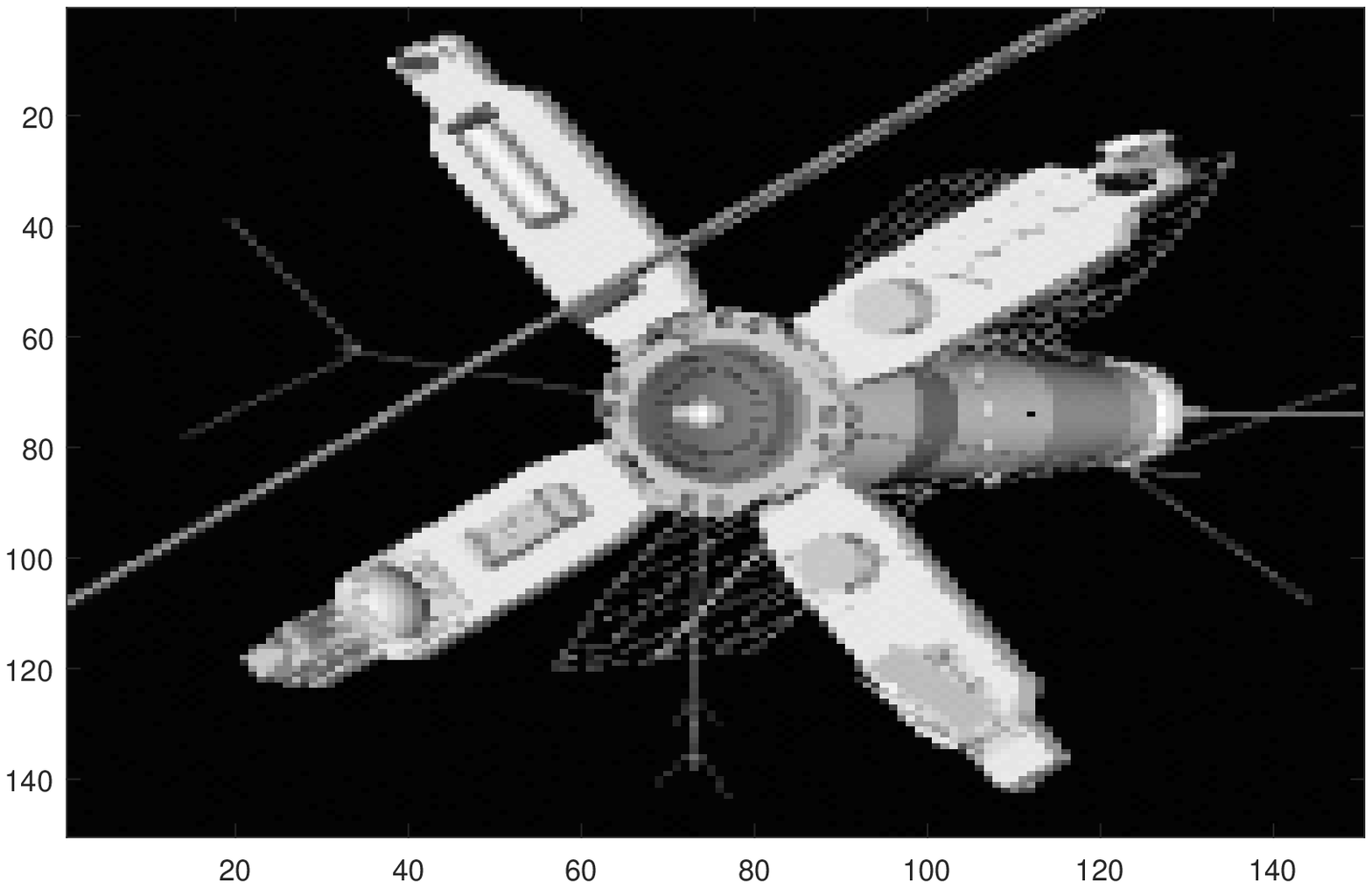}}
\subfigure[]{\includegraphics[height=5cm,width=5cm]{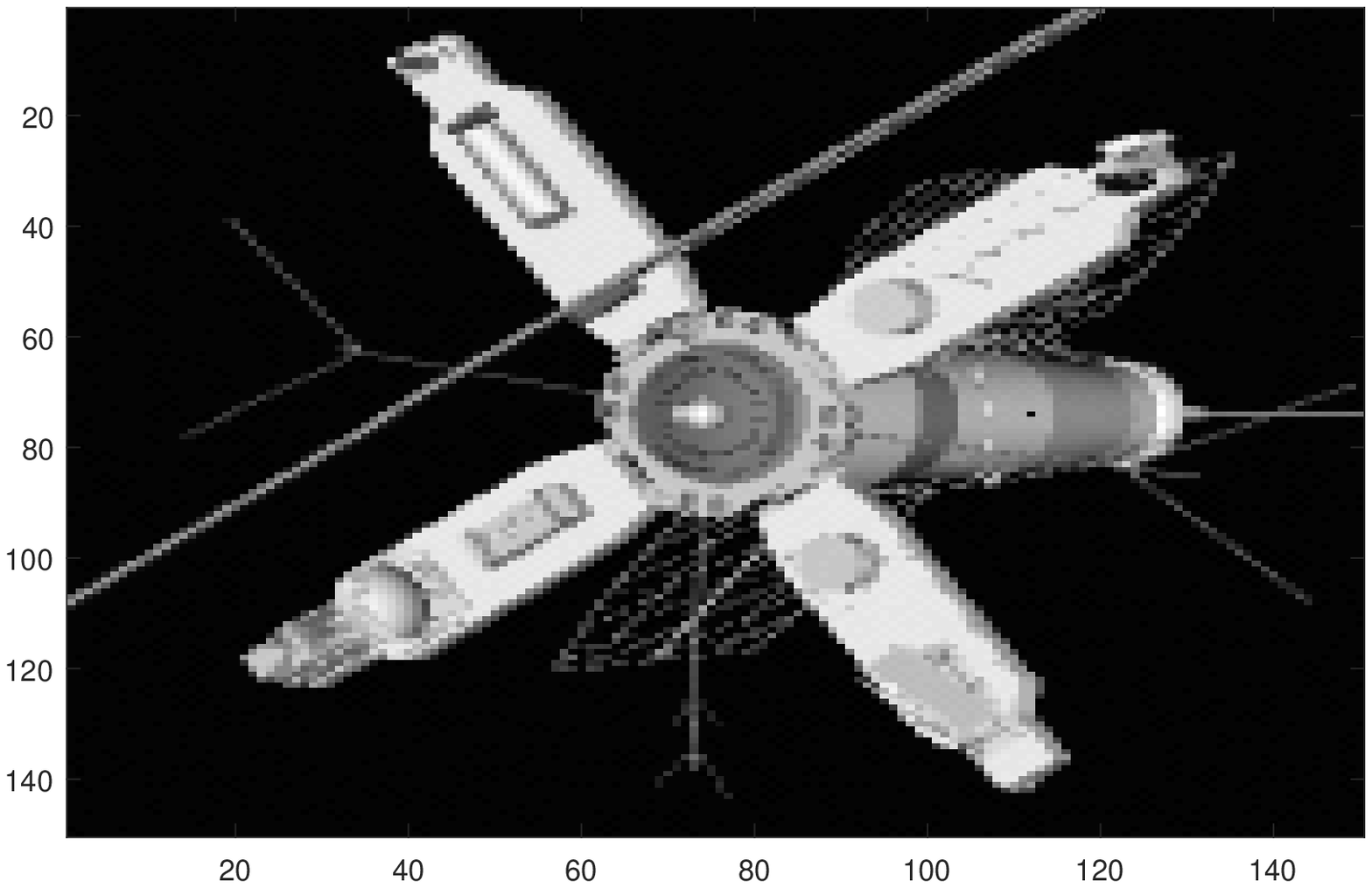}}
\caption{Example 4.2: (a) Available blur- and noise-contaminated \texttt{satellite} image 
represented by the matrix $B$, (b) desired image, (c) restored image for the noise level 
$\nu=1\cdot10^{-3}$ and regularization matrix 
$P^{(1)}\widetilde{L}^{(1)} \otimes P^{(1)}\widetilde{L}^{(1)}$, and (d) restored image 
for the same noise level and regularization matrix 
$P^{(2)}\widetilde{L}^{(2)} \otimes P^{(2)}\widetilde{L}^{(2)}$.}
\label{fig4.3}
\end{figure}

Example 4.2. We consider the restoration of the test image \texttt{satellite}, which is 
represented by an array of $150\times 150$ pixels. The available image, represented by the
matrix $B\in\R^{150\times 150}$, is corrupted by Gaussian blur and additive zero-mean white
Gaussian noise; it is shown in Figure \ref{fig4.3}(a). Figure \ref{fig4.3}(b) displays the 
desired blur- and noise-free image. It is represented by the matrix 
$\widehat{X}\in \R^{150\times 150}$, and is assumed not to be known. The blurring matrices
$K^{(i)}\in \R^{150\times 150}$, $i=1,2$, are Toeplitz matrices. We let 
$K^{(1)}=K^{(2)}=K$, where $K$ is analogous to the matrix generated by the MATLAB function 
{\sf blur} from \cite{Hansen} using the parameter values $\texttt{band}=5$ and 
$\texttt{sigma}=1.5$. We show results for the noise levels $\nu = 1\cdot 10^{-j}$, 
$j=2,3,4$. The data matrix $B$ in (\ref{blkeq:tik1}) is determined similarly as in Example
4.1 and the regularization matrices used are the same as in Example 4.1. 

Table \ref{tab4.3} displays the number of iterations, CPU time, and the relative errors 
$e_k$ in the computed approximate solutions $X_{\mu,k}$ determined by the global Arnoldi 
process with data matrices contaminated by noise of levels $\nu=1\cdot10^{-j}$, $j=2,3,4$,
for several regularization matrices. The iterations are terminated as soon as the 
discrepancy principle can be satisfied and the regularization parameter then is chosen so
that (\ref{nonlin}) holds. Table \ref{tab4.3} shows the global Arnoldi process with the 
regularization matrices $P^{(i)}\widetilde{L}^{(i)} \otimes P^{(i)}\widetilde{L}^{(i)}$,
$i =1,2$, and $P^{(2)}\widetilde{L}^{(2)} \otimes P^{(1)}\widetilde{L}^{(1)}$ to yield the
best approximations of $\widehat{X}$ and to require the least CPU time. Figures 
\ref{fig4.3}(c) and \ref{fig4.3}(d) show the computed approximate solutions determined by 
the global Arnoldi process with $\nu=1\cdot10^{-3}$ and the regularization matrices 
$P^{(1)}\widetilde{L}^{(1)} \otimes P^{(1)}\widetilde{L}^{(1)}$ and 
$P^{(2)}\widetilde{L}^{(2)} \otimes P^{(2)}\widetilde{L}^{(2)}$, respectively. The quality
of the computed restorations is visually indistinguishable. $~~~\Box$

\begin{table}[tbh]
\begin{center}
\begin{tabular}{cccc}
\hline
regularization & number of & CPU time & relative \\
matrix & iterations $k$ & in seconds & error $e_k$ \\
\hline
&&noise level $\nu=1\cdot10^{-3}$&\\
\hline
$\widetilde{L}^{(1)} \otimes \widetilde{L}^{(1)}$                 & $14$ & $22.7 $ & $1.22 \cdot 10^{-2}$ \\
$P^{(1)}\widetilde{L}^{(1)} \otimes P^{(1)}\widetilde{L}^{(1)}$   & $1$ & $19.8 $ &  $7.47 \cdot 10^{-3}$ \\
$\widetilde{L}^{(1)}P^{(1)} \otimes \widetilde{L}^{(1)}P^{(1)}$   & $14$ & $20.4 $ & $1.22 \cdot 10^{-2}$ \\
$\widetilde{L}^{(2)} \otimes \widetilde{L}^{(1)}$                 & $14$ & $21.2 $ & $1.15 \cdot 10^{-2}$ \\
$P^{(2)}\widetilde{L}^{(2)} \otimes P^{(1)}\widetilde{L}^{(1)}$   & $1$ & $21.2 $ &  $7.60 \cdot 10^{-3}$ \\
$\widetilde{L}^{(2)}P^{(2)} \otimes \widetilde{L}^{(1)}P^{(1)}$   & $14$ & $20.5 $ & $1.15 \cdot 10^{-2}$ \\
$\widetilde{L}^{(2)} \otimes \widetilde{L}^{(2)}$                 & $13$ & $20.9 $ & $1.35 \cdot 10^{-2}$ \\
$P^{(2)}\widetilde{L}^{(2)} \otimes P^{(2)}\widetilde{L}^{(2)}$   & $1$ & $21.1 $ &  $7.60 \cdot 10^{-3}$ \\
$\widetilde{L}^{(2)}P^{(2)} \otimes \widetilde{L}^{(2)}P^{(2)}$   & $13$ & $20.9 $ & $1.35 \cdot 10^{-2}$ \\
\hline
&&noise level $\nu=1\cdot10^{-4}$&\\
\hline
$\widetilde{L}^{(1)} \otimes \widetilde{L}^{(1)}$                 & $20$ & $19.0 $ & $2.20 \cdot 10^{-3}$ \\
$P^{(1)}\widetilde{L}^{(1)} \otimes P^{(1)}\widetilde{L}^{(1)}$   & $1$ & $18.7 $ &  $2.05 \cdot 10^{-3}$ \\
$\widetilde{L}^{(1)}P^{(1)} \otimes \widetilde{L}^{(1)}P^{(1)}$   & $20$ & $18.7 $ & $2.20 \cdot 10^{-3}$ \\
$\widetilde{L}^{(2)} \otimes \widetilde{L}^{(1)}$                 & $20$ & $19.1 $ & $2.19 \cdot 10^{-3}$ \\
$P^{(2)}\widetilde{L}^{(2)} \otimes P^{(1)}\widetilde{L}^{(1)}$   & $1$ & $19.0 $ &  $2.04 \cdot 10^{-3}$ \\
$\widetilde{L}^{(2)}P^{(2)} \otimes \widetilde{L}^{(1)}P^{(1)}$   & $20$ & $18.7 $ & $2.19 \cdot 10^{-3}$ \\
$\widetilde{L}^{(2)} \otimes \widetilde{L}^{(2)}$                 & $20$ & $19.0 $ & $2.18 \cdot 10^{-3}$ \\
$P^{(2)}\widetilde{L}^{(2)} \otimes P^{(2)}\widetilde{L}^{(2)}$   & $1$ & $18.6 $ &  $2.04 \cdot 10^{-3}$ \\
$\widetilde{L}^{(2)}P^{(2)} \otimes \widetilde{L}^{(2)}P^{(2)}$   & $20$ & $18.5 $ & $2.18 \cdot 10^{-3}$ \\
\hline
\end{tabular}
\end{center}
\caption{Example 4.3: Number of iterations, CPU time in seconds, and relative error $e_k$ 
in computed approximate solutions $X_{\mu,k}$ determined by Tikhonov regularization based 
on the global Arnoldi process for two noise levels and several regularization matrices.}\label{tab4.4}
\end{table}

\begin{figure}[h!tbp]
\centering
\subfigure[]{\includegraphics[height=5cm,width=5cm]{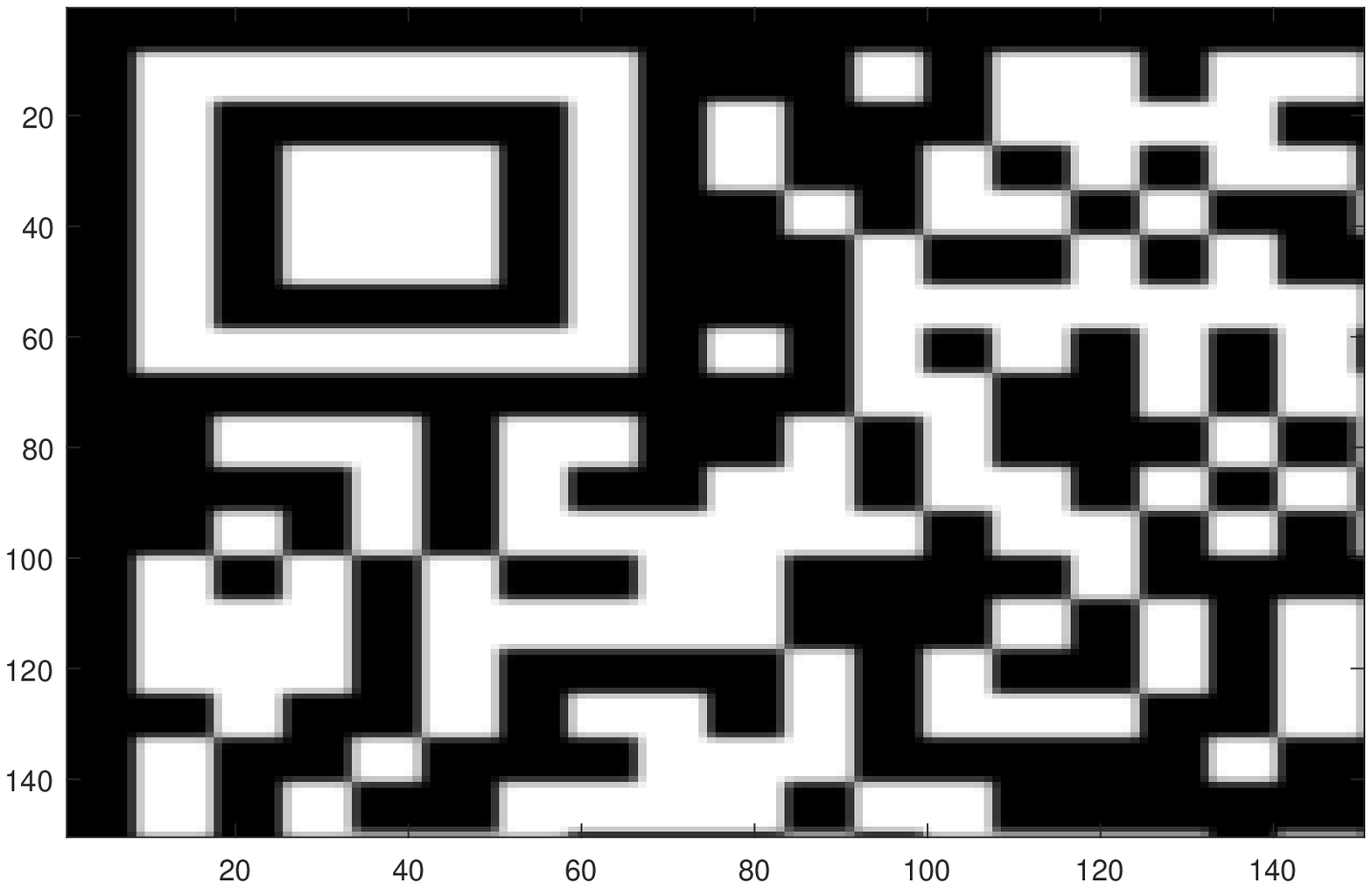}}
\subfigure[]{\includegraphics[height=5cm,width=5cm]{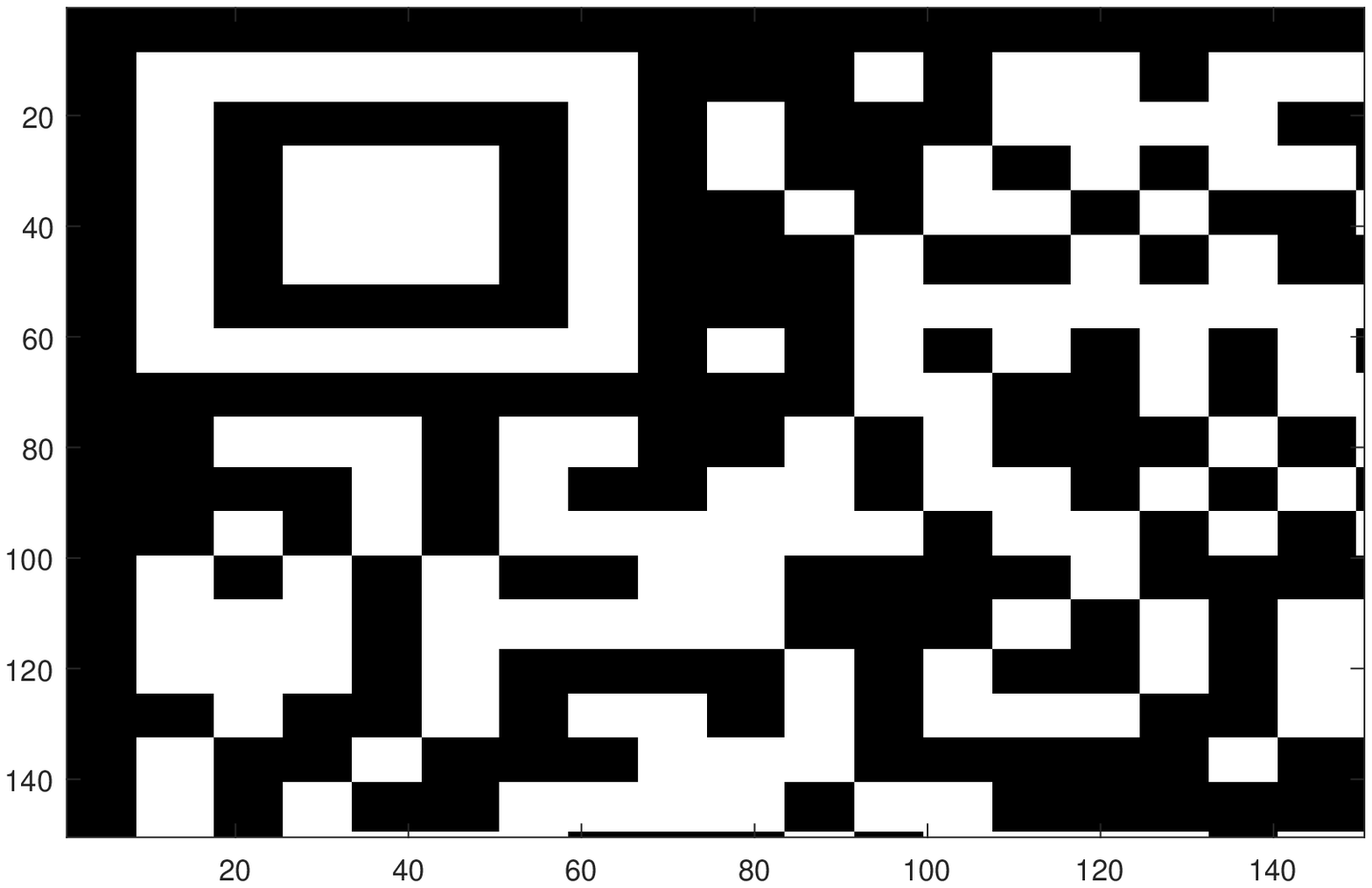}}
\subfigure[]{\includegraphics[height=5cm,width=5cm]{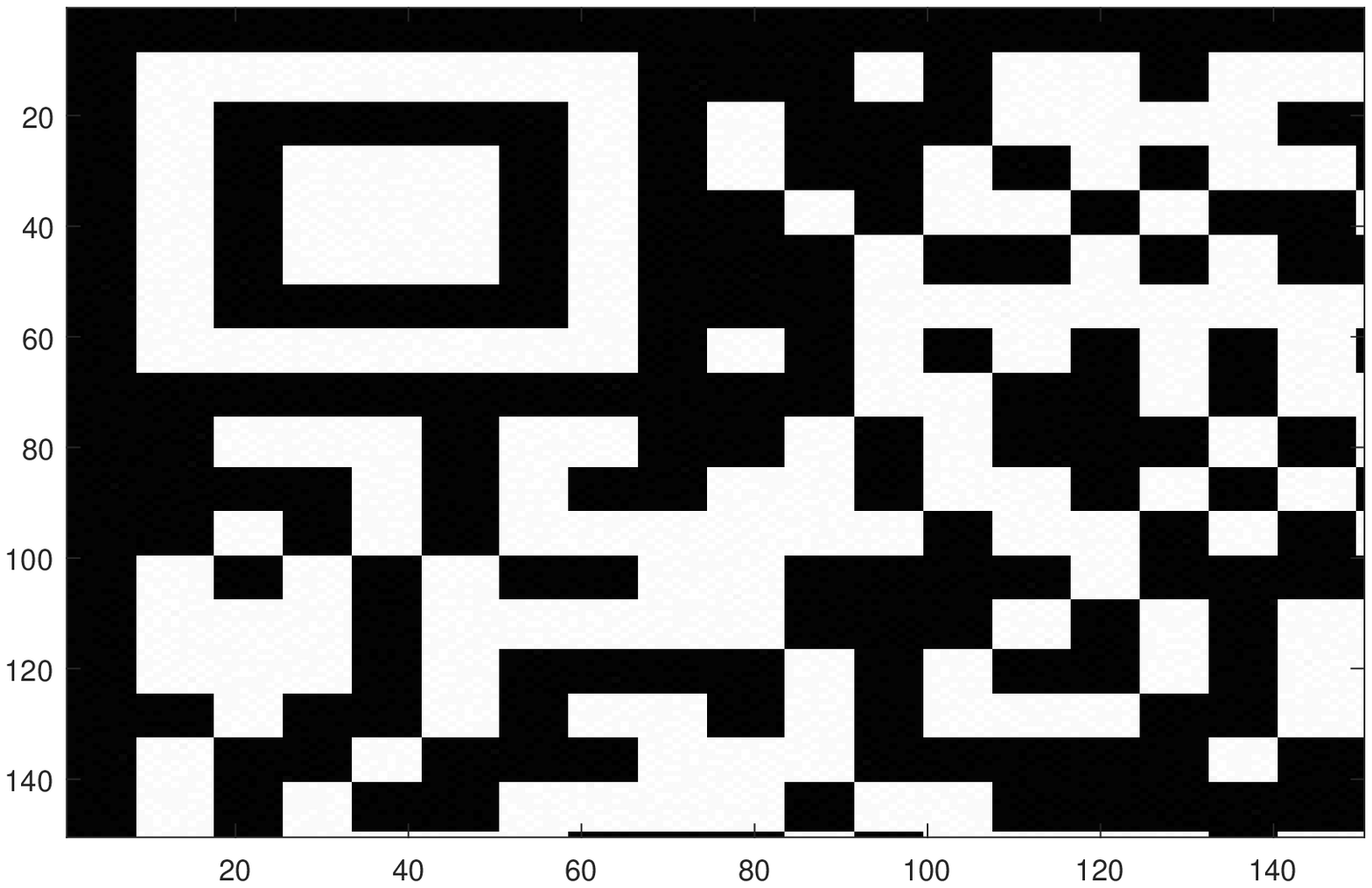}}
\subfigure[]{\includegraphics[height=5cm,width=5cm]{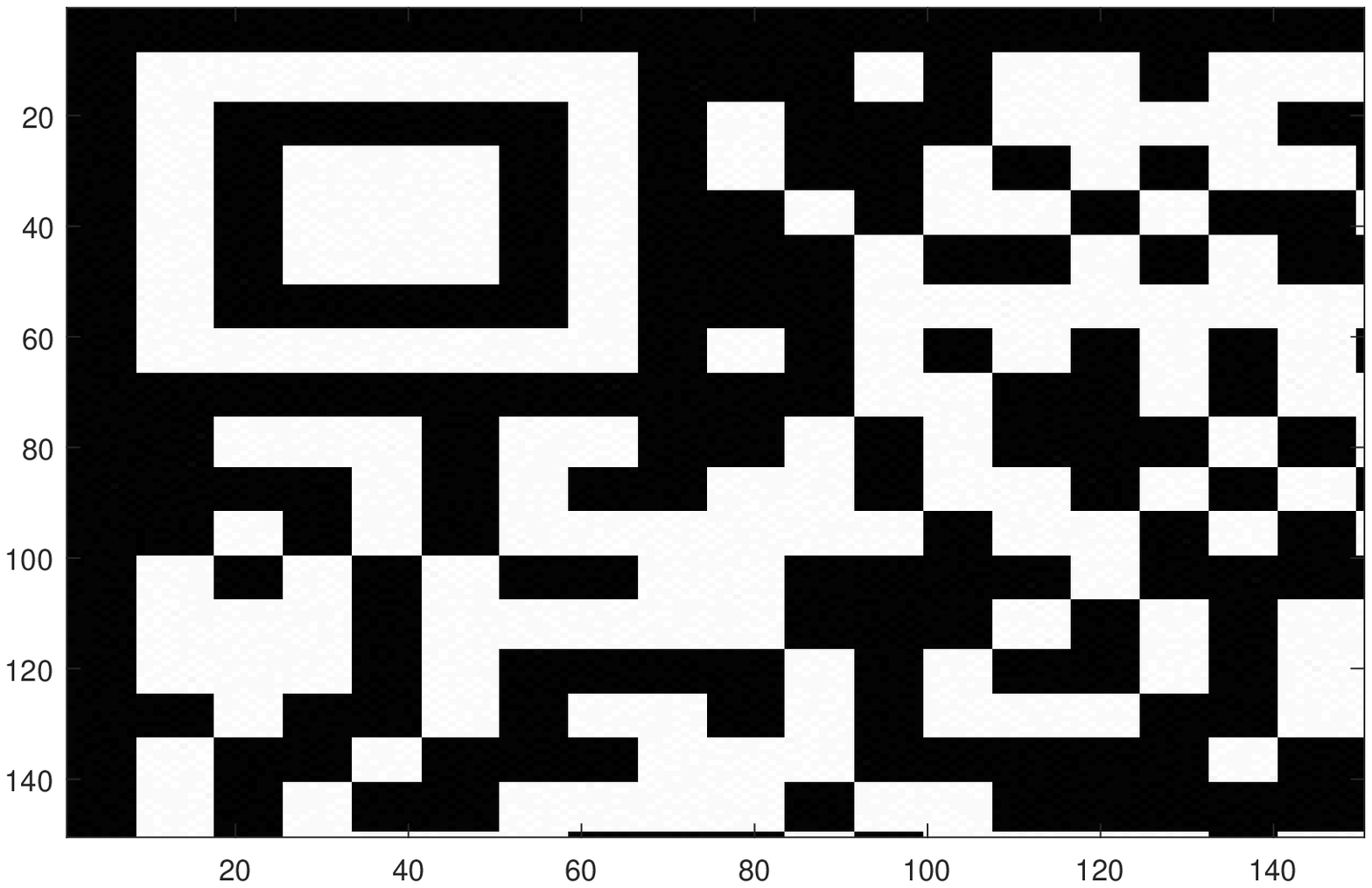}}
\caption{Example 4.3: (a) Available blur- and noise-contaminated \texttt{QR code} image 
represented by the matrix $B$, (b) desired image, (c) restored image for the noise level 
$\nu=1\cdot10^{-3}$ and regularization matrix 
$P^{(1)}\widetilde{L}^{(1)} \otimes P^{(1)}\widetilde{L}^{(1)}$, and (d) restored image 
for the same noise level and regularization matrix 
$P^{(2)}\widetilde{L}^{(2)} \otimes P^{(2)}\widetilde{L}^{(2)}$.}\label{fig4.4}
\end{figure}

{\bf Example 4.3.}
This example is similar to the previous one; only the image to be restored differs. Here 
we consider the restoration of the test image \texttt{QRcode}, which is represented by an
array of $150\times 150$ pixels corrupted by Gaussian blur, and additive zero-mean white 
Gaussian noise. Figure \ref{fig4.4}(a) shows the corrupted image that we would like to 
restore. It is represented by the matrix $B\in \R^{150\times 150}$. The desired blur- and
noise-free image is depicted in Figure \ref{fig4.4}(b). The blurring matrices
$K^{(i)}\in \R^{150\times 150},i=1,2,$ are Toeplitz matrices. They are generated like in 
Example 4.2. The regularization matrices $L$ are the same as in Example 4.2.

Table \ref{tab4.4} is analogous to Table \ref{tab4.3}. The iterations are terminated as 
soon as the discrepancy principle (\ref{nonlin}) can be satisfied. The table shows the
regularization matrices $P^{(i)}\widetilde{L}^{(i)} \otimes P^{(i)}\widetilde{L}^{(i)}$,
$i=1,2$, and $P^{(2)}\widetilde{L}^{(2)} \otimes P^{(1)}\widetilde{L}^{(1)}$ to yield the
most accurate approximations of $\widehat{X}$. Figures \ref{fig4.4}(c) and  
\ref{fig4.4}(d) show the restorations determined for $\nu=1\cdot10^{-3}$ with the 
regularization matrices $P^{(1)}\widetilde{L}^{(1)} \otimes P^{(1)}\widetilde{L}^{(1)}$ 
and $P^{(2)}\widetilde{L}^{(2)} \otimes P^{(2)}\widetilde{L}^{(2)}$, respectively. One 
cannot visually distinguish the quality of these restorations.  $~~~\Box$

\section{Concluding remarks}\label{sec5}
This paper presents a novel method to determine regularization matrices for discrete 
ill-posed problems in several space-dimensions by solving a matrix nearness problem. 
Numerical examples illustrate the effectiveness of the regularization matrices 
determined in this manner.  While all examples use the discrepancy principle to
determine a suitable value of the regularization parameter, other parameter choice rules 
also can be applied; see, e.g., \cite{BJ,RR} for discussions and references.

\section*{Acknowledgment}
Research by GH was supported in part by the Fund of Application Foundation of Science and 
Technology Department of the Sichuan Province (2016JY0249) and NNSF (11501392).
Research by SN was partially supported by SAPIENZA Universit\`a di Roma and INdAM-GNCS.


\begin{thebibliography}{99}
\bibitem{BJ}
A. Bouhamidi and K. Jbilou, {\em Sylvester Tikhonov-regularization methods in image restoration},
J. Comput. Appl. Math., 206 (2007), pp. 86--98.

\bibitem{BRRS}
C. Brezinski, M. Redivo--Zaglia, G. Rodriguez, and S. Seatzu, {\em Extrapolation
techniques for ill-conditioned linear systems}, Numer. Math., 81 (1998), pp. 1--29.




\bibitem{CLR}
D. Calvetti, B. Lewis, and L. Reichel, {\em A hybrid GMRES and TV-norm based method for
image restoration}, in Advanced Signal Processing Algorithms, Architectures, and
Implementations XII, ed. F. T. Luk, Proceedings of the Society of Photo-Optical
Instrumentation Engineers (SPIE), vol. 4791, The International Society for Optical
Engineering, Bellingham, WA, 2002, pp. 192--200.

\bibitem{CRS}
D. Calvetti, L. Reichel, and A. Shuibi, {\em Invertible smoothing preconditioners for
linear discrete ill-posed problems}, Appl. Numer. Math., 54 (2005), pp. 135--149.

\bibitem{DNR1}
M. Donatelli, A. Neuman, and L. Reichel, {\em Square regularization matrices for large
linear discrete ill-posed problems}, Numer. Linear Algebra Appl., 19 (2012), pp. 896--913.

\bibitem{DR1}
M. Donatelli and L. Reichel, {\em Square smoothing regularization matrices with accurate
boundary conditions}, J. Comput. Appl. Math., 272 (2014), pp. 334--349.

\bibitem{DNR2}
L. Dykes, S. Noschese, and L. Reichel, {\em Rescaling the GSVD with application to
ill-posed problems}, Numer. Algorithms, 68 (2015), pp. 531--545.


\bibitem{EHN}
H. W. Engl, M. Hanke, and A. Neubauer, {\em Regularization of Inverse Problems}, Kluwer,
Dordrecht, 1996.

\bibitem{GNR}
S. Gazzola, P. Novati, and M. R. Russo, {\em On Krylov projection methods and Tikhonov
regularization}, Electron. Trans. Numer. Anal., 44 (2015), pp. 83--123.

\bibitem{Ha2}
P. C. Hansen, {\em Rank-Deficient and Discrete Ill-Posed Problems}, SIAM, Philadelphia,
1998.

\bibitem{Hansen}
P. C. Hansen, {\em Regularization tools version 4.0 for MATLAB 7.3}, Numer. Algorithms, 46 
(2007), pp. 189--194.

\bibitem{HJ}
P. C. Hansen and T. K. Jensen, {\em Smoothing norm preconditioning for regularizing
minimum residual methods}, SIAM J. Matrix Anal. Appl., 29 (2006), pp. 1--14.

\bibitem{HR}
M. E. Hochstenbach and L. Reichel, {\em An iterative method for Tikhonov regularization
with a general linear regularization operator}, J. Integral Equations Appl., 22 (2010),
pp. 463--480.

\bibitem{HRY}
M. E. Hochstenbach, L. Reichel, and X. Yu, {\em A Golub--Kahan-type reduction method for
matrix pairs}, J. Sci. Comput., 65 (2015), pp. 767--789.

\bibitem{HJbook}
R. A. Horn and C. R. Johnson, {\it Topics in Matrix Analysis}, Cambridge University Press,
Cambridge, 1991.

\bibitem{HNR}
G. Huang, S. Noschese, and L. Reichel,  {\em Regularization matrices determined by matrix
nearness problems}, Linear Algebra Appl., 502 (2016), pp. 41--57.

\bibitem{JMS}
K. Jbilou, A. Messaoudi,  and H. Sadok, {\em Global FOM and GMRES algorithms for matrix
equations}, Appl. Numer. Math., 31 (1999), pp. 49--63.

\bibitem{JST}
K. Jbilou, H. Sadok, and A. Tinzefte, {\em Oblique projection methods for linear systems
with multiple right-hand sides}, Electron. Trans. Numer. Anal., 20 (2005), pp. 119--138.

\bibitem{KHE}
M. E. Kilmer, P. C. Hansen, and M. I. Espa\~{n}ol, {\em A projection-based approach to
general-form Tikhonov regularization}, SIAM J. Sci. Comput., 29 (2007), pp. 315--330.


\bibitem{LMRS}
A. Lanza, S. Morigi, L. Reichel, and F. Sgallari, {\em A generalized Krylov subspace
method for $\ell_p$-$\ell_q$ minimization}, SIAM J. Sci. Comput., 37 (2015), pp. S30--S50.

\bibitem{LR}
B. Lewis and L. Reichel, {\em Arnoldi--Tikhonov regularization methods}, J. Comput. Appl.
Math., 226 (2009), pp. 92--102.

\bibitem{MRS}
S. Morigi, L. Reichel, and F. Sgallari, {\em Orthogonal projection regularization
operators}, Numer. Algorithms, 44 (2007), pp. 99--114.

\bibitem{NR}
S. Noschese and L. Reichel, {\em Inverse problems for regularization matrices}, Numer.
Algorithms, 60 (2012), pp. 531--544.


\bibitem{RR}
L. Reichel and G. Rodriguez, {\em Old and new parameter choice rules for discrete
ill-posed problems}, Numer. Algorithms, 63 (2013), pp. 65--87.

\bibitem{RYe}
L. Reichel and Q. Ye, {\em Simple square smoothing regularization operators}, Electron.
Trans. Numer. Anal., 33 (2009), pp. 63--83.

\bibitem{RYu1}
L. Reichel and X. Yu, {\em Matrix decompositions for Tikhonov regularization}, Electron.
Trans. Numer. Anal., 43 (2015), pp. 223--243.

\bibitem{RYu2}
L. Reichel and X. Yu, {\em Tikhonov regularization via flexible Arnoldi reduction}, BIT,
55 (2015), pp. 1145--1168.




\end{thebibliography}
\end{document}